\documentclass[a4paper,10pt]{article}
\usepackage[english]{babel}
\usepackage[utf8]{inputenc}
\usepackage{hyperref}
\usepackage{geometry}
\usepackage{authblk, verbatim}

\title{Modular and fractional $L$-intersecting \\families of vector spaces}
\author[1]{Rogers Mathew \footnote{This author was supported by a grant from the Science and Engineering Research
Board, Department of Science and Technology, Govt. of India (project number: MTR/2019/000550).}}
\author[2]{Tapas Kumar Mishra}
\author[3]{Ritabrata Ray}
\author[4]{Shashank Srivastava}

\affil[1]{Department of Computer Science and Engineering, \protect\\
Indian Institute of Technology Hyderabad, India. \protect\\
rogers@iith.ac.in}
\affil[2]{Department of Computer Science and Engineering, \protect\\
National Institute of Technology Rourkela, India. \protect \\
mishrat@nitrkl.ac.in}
\affil[3]{Department of Electrical \& Computer Engineering, \protect\\
Cornell University, Ithaka, NY 14853, USA. \protect\\
rayritabrata96@gmail.com}
\affil[4]{Toyota Technological Institute at Chicago, Chicago, IL  60637, USA. \protect\\
shashanks@ttic.edu}
\usepackage{theoremref}
\usepackage{amsmath, xcolor}
\usepackage{amsfonts}
\usepackage{amssymb}
\usepackage{graphicx}
\usepackage{subcaption}
\usepackage{float}
\usepackage[nottoc,numbib]{tocbibind}
\usepackage{amsthm}	
\usepackage{enumerate}

\theoremstyle{definition}
\newtheorem{theorem}{Theorem}[section]
\newtheorem{lemma}[theorem]{Lemma}
\newtheorem{proposition}[theorem]{Proposition}
\newtheorem{corollary}[theorem]{Corollary}
\newtheorem{definition}[theorem]{Definition}

\newtheorem{example}{Example}

\newcommand\qbin[3]{\left[\begin{matrix} #1 \\ #2 \end{matrix} \right]_{#3}}

\date{}

\begin{document}
\maketitle
\begin{abstract}
In the first part of this paper, we prove the following theorem which is the $q$-analogue of a generalized modular Ray-Chaudhuri-Wilson Theorem shown in [Alon, Babai, Suzuki, J. Combin. Theory Series A, 1991]. It is also a generalization of the main theorem in [Frankl and Graham, European J. Combin. 1985] under certain circumstances. 
\\ $\bullet$ Let $V$ be a vector space of dimension $n$ over a finite field of size $q$. Let $K = \{k_1, \ldots , k_r\},L = \{\mu_1, \ldots , \mu_s\}$ be two disjoint subsets of $\{0,1, \ldots , b-1\}$ with $k_1 < \cdots < k_r$.  
   Let 
   $\mathcal{F} = \{V_1,V_2,\ldots,V_m\}$ be a family of subspaces of $V$ such that $\forall i \in [m]$, dim($V_i$) $\equiv k_t \pmod b$, for some $k_t \in K$; for every distinct $i,j \in [m]$, dim($V_i \cap V_j$) $ \equiv \mu_t \pmod b$, for some $\mu_t \in L$. Moreover, it is given that neither of the following two conditions hold:
   \begin{enumerate}[(i)]
   \item $q+1$ is a power of 2, and $b=2$
   \item $q=2, b=6$
   \end{enumerate}
 Then,
  \begin{equation*}
    		|\mathcal{F}| \leq \begin{cases}
    											\scalebox{0.7}{ $N(n,s,r,q)$}, \hskip 1cm \mbox{ if }\left(s+k_r \leq n \mbox { and } r(s-r+1) \leq b-1\right) \mbox{ or } (s < k_1 + r)  \\
    											\scalebox{0.7}{ $N(n,s,r,q) + \sum\limits_{t \in [r]}\qbin{n}{k_t}{q}$}, \hskip 1cm \text{otherwise}
											\end{cases}    		
    		\end{equation*}
, where $N(n,s,r,q) := \qbin{n}{s}{q} + \qbin{n}{s-1}{q} + \cdots + \qbin{n}{s-r+1}{q}.$  

In the second part of this paper, we prove $q$-analogues of results on a recent notion called  \emph{fractional $L$-intersecting family} of sets for families  of subspaces of a given vector space over a finite field of size $q$. We use the above theorem to obtain a general upper bound to the cardinality of such families. We give an improvement to  this general upper bound in certain special cases. 
\end{abstract}      
  \begin{section}{Introduction}
   Let $[n]$ be the set of all natural numbers from $1$ to $n$. A family $\mathcal{F}$ of subsets of $[n]$ is called \emph{intersecting} if every set  in $\mathcal{F}$  has a non-empty intersection with every other set in $\mathcal{F}$. One of the earliest studies on intersecting families dates back to the famous Erd\H{o}s-Ko-Rado Theorem \cite{erdos1961intersection} about maximal uniform intersecting families.  Ray-Chaudhuri and Wilson \cite{ray-chaudhuri1975} introduced the notion of  $L$-intersecting families. Let $L = \{l_1, \ldots , l_s\}$ be a set of non-negative integers. A family $\mathcal{F}$ of subsets of $[n]$ is said to be \emph{$L$-intersecting} if for every distinct $F_i, F_j$ in $\mathcal{F}$, $|F_i \cap F_j| \in L$. 
  The Ray-Chaudhuri-Wilson Theorem states that if $\mathcal{F}$ is $t$-uniform (that is, every set in $\mathcal{F}$ is $t$-sized), then $|\mathcal{F}| \leq {n \choose s}$. This bound is tight as shown by the set of all $s$-sized subsets of $[n]$ with $L=\{0, \ldots , s-1\}$. Frankl-Wilson Theorem \cite{Frankl1981} extends this to non-uniform families by showing that $|\mathcal{F}| \leq \sum\limits_{i=0}^{s}{n \choose i}$, where $\mathcal{F}$ is any family of subsets of $[n]$ that is $L$-intersecting. The collection of all the subsets of $[n]$ of size at most $s$ with $L= \{0, \ldots s-1\}$ is a tight example to this bound. The first proofs of these theorems were based on the technique of higher incidence matrices. Alon, Babai, and Suzuki in \cite{ALON1991165} generalized the Frankl-Wilson Theorem using a proof that operated on spaces of multilinear polynomials. They showed that if the sizes of the sets in $\mathcal{F}$ belong to $K=\{k_1, \ldots , k_r\}$ with each $k_i > s-r$, then $|\mathcal{F}| \leq {n \choose s} + \cdots + {n \choose s-r+1}$. A modular version of the Ray-Chaudhuri-Wilson Theorem was shown in \cite{FranklW81}. This result was generalized in \cite{ALON1991165}. See \cite{liu2014set} for a survey on $L$-intersecting families. 
  
  Researchers have also been working on similar intersection theorems for subspaces of a given vector space over a finite field. Hsieh \cite{hsieh1975intersection}, and Deza and Frankl \cite{deza1983erdos} showed Erd\H{o}s-Ko-Rado type theorems for subspaces. 
  Let $V$ be a vector space of dimension $n$ over a finite field of size $q$. The number of $d$-dimensional subspaces of $V$ is given by the $q$-binomial coefficient $\qbin{n}{d}{q} = \frac{(q^n-1)(q^{n-1}-1)\cdots(q^{n-d+1}-1)}{(q^d-1)(q^{d-1}-1)\cdots(q-1)}$. The following theorem which is a $q$-analog of the Ray-Chaudhuri-Wilson Theorem by considering families of subspaces instead of subsets is due to \\ \cite{FRANKL1985183}.	
  \begin{theorem}\thlabel{Frankl_Graham_subspace}[Theorem 1.1 in \cite{FRANKL1985183}]
   Let $V$ be a vector space over of dimension $n$ over a finite field of size $q$. Let $\mathcal{F} = \{V_1,V_2,\ldots,V_m\}$ be a family of subspaces of $V$ such that dim($V_i$) $ = k$, $\forall i \in [m]$. Let $0 \leq \mu_1 < \mu_2 < \cdots < \mu_s < b$ be integers such that $k \not\equiv \mu_t \pmod b$, for any $t$. For every $1 \leq i < j \leq m$, dim($V_i \cap V_j$) $ \equiv \mu_t \pmod b$, for some $t$.  
   Then,
   \begin{equation*}
    |\mathcal{F}| \le \qbin{n}{s}{q}
   \end{equation*}
   except possibly for $q=2,b=6,s \in \{3,4\}$. 
  \end{theorem}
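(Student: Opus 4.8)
The plan is to run the linear-algebra (polynomial) method that proves the modular Ray--Chaudhuri--Wilson theorem for sets, with the prime used there replaced by a \emph{primitive} prime divisor of $q^{b}-1$. First I would dispose of the trivial case $b=1$ (then no $\mu_{t}$ can satisfy $k\not\equiv\mu_{t}$, so $s=0$ and $|\mathcal{F}|\le 1$) and assume $b\ge 2$. By Zsygmondy's theorem, $q^{b}-1$ has a prime divisor $p$ with $p\nmid q^{j}-1$ for all $1\le j<b$, that is $\mathrm{ord}_{p}(q)=b$, except exactly in the two cases $(q,b)=(2,6)$ and $b=2$ with $q+1$ a power of $2$. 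The first is the exceptional case in the statement; the second I would handle separately at the end. Assuming such a $p$ exists, note $p\nmid q-1$ (since $b\ge 2$), so $q-1$ is invertible modulo $p$ and $q^{d}\bmod p$ depends only on $d\bmod b$; hence $\qbin{d}{1}{q}=\frac{q^{d}-1}{q-1}\equiv\qbin{d\bmod b}{1}{q}\pmod{p}$.

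Next I would set up the polynomials. Enumerate the $1$-dimensional subspaces $P_{1},\dots,P_{N}$ of $V$, where $N=\qbin{n}{1}{q}$, and for a subspace $W\le V$ let $\chi_{W}\in\mathbb{F}_{p}^{N}$ record which $P_{i}$ it contains, so that $\langle\chi_{W},\chi_{W'}\rangle=\qbin{\dim(W\cap W')}{1}{q}$. With $c_{t}:=\qbin{\mu_{t}}{1}{q}\bmod p$, define over $\mathbb{F}_{p}$ the polynomials $f_{i}(x)=\prod_{t=1}^{s}\bigl(\langle\chi_{V_{i}},x\rangle-c_{t}\bigr)$ of degree at most $s$, for $i\in[m]$. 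Using the congruence above together with the hypothesis that $\dim(V_{i}\cap V_{j})\equiv\mu_{t}\pmod{b}$ for some $t$, one factor of $f_{i}(\chi_{V_{j}})$ vanishes whenever $j\ne i$, so $f_{i}(\chi_{V_{j}})=0$; while $f_{i}(\chi_{V_{i}})=\prod_{t}(\qbin{k}{1}{q}-c_{t})\ne 0$, since $k\not\equiv\mu_{t}\pmod{b}$ and $\mathrm{ord}_{p}(q)=b$ force $q^{k}\not\equiv q^{\mu_{t}}\pmod{p}$ for every $t$. Thus the $m\times m$ matrix $\bigl(f_{i}(\chi_{V_{j}})\bigr)$ is diagonal with nonzero diagonal entries, so $f_{1},\dots,f_{m}$ are linearly independent over $\mathbb{F}_{p}$ --- a fortiori as functions on the set $\mathcal{W}$ of all $k$-dimensional subspaces of $V$, since they are already independent on the subset $\{\chi_{V_{1}},\dots,\chi_{V_{m}}\}$.

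To finish I would bound the dimension of the space these functions span. Expanding $f_{i}$ into monomials $\prod_{j\in S}x_{j}$ and using $\prod_{j\in S}(\chi_{W})_{j}=1$ exactly when $\mathrm{span}\{P_{j}:j\in S\}\le W$, each restriction $f_{i}|_{\mathcal{W}}$ is a linear combination of the rows of the inclusion matrix $M_{\le s,k}$ whose rows are indexed by the subspaces $U$ of dimension at most $s$, columns by the $k$-dimensional subspaces, with $(U,W)$-entry $1$ iff $U\le W$. Hence $m\le\mathrm{rank}_{\mathbb{F}_{p}}(M_{\le s,k})\le\mathrm{rank}_{\mathbb{Q}}(M_{\le s,k})$. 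Over $\mathbb{Q}$ the identity $M_{i,s}\,M_{s,k}=\qbin{k-i}{s-i}{q}\,M_{i,k}$ (valid for $i\le s\le k$, the scalar being a nonzero integer) shows every row of $M_{i,k}$ lies in the $\mathbb{Q}$-row space of $M_{s,k}$, so $\mathrm{rank}_{\mathbb{Q}}(M_{\le s,k})=\mathrm{rank}_{\mathbb{Q}}(M_{s,k})=\qbin{n}{s}{q}$, the last equality being the $q$-analogue of Gottlieb's rank formula for inclusion matrices in the non-degenerate range $s\le k\le n-s$; the remaining ranges ($s>k$, or $s+k>n$) reduce, by the symmetry $\qbin{n}{j}{q}=\qbin{n}{n-j}{q}$ together with the non-modular $q$-analogue of Ray--Chaudhuri--Wilson or direct inspection, to a bound at most $\qbin{n}{s}{q}$. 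This yields $|\mathcal{F}|=m\le\qbin{n}{s}{q}$.

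Finally, the leftover case $b=2$, $q+1$ a power of $2$ (which is why the statement lists no exception there): now $L\subseteq\{0,1\}$ and $k\bmod 2\notin L$ force $s\le 1$; $s=0$ is trivial, and for $s=1$, so $q$ is odd, I would argue directly over $\mathbb{F}_{2}$, using that $\qbin{d}{1}{q}\equiv d\pmod{2}$ for odd $q$: then the Gram matrix of $\{\chi_{V_{i}}\}$ over $\mathbb{F}_{2}$ is the identity (diagonal $\equiv k$, odd; off-diagonal $\equiv\mu_{1}$, even), so $|\mathcal{F}|\le N=\qbin{n}{1}{q}$. The main obstacle is the very first step: the method genuinely requires a modulus in which $q$ has multiplicative order exactly $b$, which is available precisely when Zsygmondy applies. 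In the excluded case $q=2,b=6$ one is forced to work modulo a composite such as $21$ (where $2$ has order $6$ but ``a product is zero $\Rightarrow$ a factor is zero'' fails), and this is exactly why that case resists for $s\in\{3,4\}$, whereas for $s\le 2$ the at most two admissible intersection values still allow an ad hoc argument; a pleasant side effect of using $\mathrm{rank}_{\mathbb{F}_{p}}\le\mathrm{rank}_{\mathbb{Q}}$ in the third step is that one never has to control $p$-adic valuations of the coefficients $\qbin{k-i}{s-i}{q}$.
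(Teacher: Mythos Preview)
The paper does not prove this theorem; it is quoted verbatim from Frankl--Graham (1985) as background. What the paper does prove is the generalization Theorem~\ref{th:1}, and the method there shares your first two steps (the Zsigmondy prime $p$ and the polynomials $g^{i}$, which are exactly your $f_{i}$) but diverges at the dimension count: instead of bounding via the $\mathbb{Q}$-rank of the inclusion matrix $M_{\le s,k}$, the paper introduces auxiliary ``swallowing'' functions $g^{x,y}$, proves (via M\"obius inversion on the subspace lattice and a gap lemma, \thref{lem:3}) that the $g^{i}$ together with the $g^{x,y}$ are linearly independent inside the span of the elementary containment functions $f^{x,y}$ with $x\le s$, and recovers $m\le\qbin{n}{s}{q}$ by subtraction. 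Your route --- passing to $\mathrm{rank}_{\mathbb{F}_{p}}(M_{\le s,k})\le\mathrm{rank}_{\mathbb{Q}}(M_{\le s,k})=\qbin{n}{s}{q}$ via the identity $M_{i,s}M_{s,k}=\qbin{k-i}{s-i}{q}M_{i,k}$ and the $q$-analogue of Gottlieb's rank theorem --- is in fact essentially the original Frankl--Graham argument; it is more direct in the uniform case and avoids the M\"obius machinery, while the paper's swallowing approach is what allows the extension to non-uniform $K$ with $r>1$.

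Two places in your proposal need tightening. In the $b=2$, $q$ odd case your Gram-matrix argument over $\mathbb{F}_{2}$ only yields the identity when $k$ is odd; when $k$ is even and $\mu_{1}=1$ the Gram matrix is $J-I$, whose $\mathbb{F}_{2}$-rank drops to $m-1$ for $m$ odd, leaving you one off from $\qbin{n}{1}{q}$. In the $(q,b)=(2,6)$ case you dispose of $s\le 2$ but the statement also claims the bound for $s=5$, which you do not address. Finally, the degenerate ranges $s>k$ and $s+k>n$ are brushed aside too quickly: e.g.\ for $s>k$ the matrix $M_{s,k}$ is zero, your argument collapses to the trivial $|\mathcal{F}|\le\qbin{n}{k}{q}$, and this is not in general at most $\qbin{n}{s}{q}$. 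These are genuine edge cases that require separate arguments beyond what you sketched.
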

 \begin{example} [Remark 3.2 in \cite{FRANKL1985183}]\label{exam:frankl_graham}
 Let $n = k+s$. Let $\mathcal{F}$ be the family of all the $k$-dimensional subspaces of $V$, where $V$ is an $n$-dimensional vector space over a finite field of size $q$. Observe that, for any two distinct $V_i, V_j \in \mathcal{F}$, $k-s  \leq dim(V_i \cap V_j) \leq  k-1$. This is a tight example for Theorem \ref{Frankl_Graham_subspace}. 
 \end{example}
Alon et al. in \cite{ALON1991165} proved a generalization of the non-modular version of the above theorem. This result was subsequently strengthened  in \cite{liu2018common}.  
  
Our paper is divided into two parts. In Part A of the paper, we prove the following theorem which is a generalization of Theorem \ref{Frankl_Graham_subspace} due to Frankl and Graham under certain circumstances. It is also the $q$-analogue of a generalized modular Ray-Chaudhuri-Wilson Theorem shown in \cite{ALON1991165}. We assume that $\qbin{a}{b}{q} = 0$, when $b<0$ or $b>a$. Let 
$$N(n,s,r,q) := \qbin{n}{s}{q} + \qbin{n}{s-1}{q} + \cdots + \qbin{n}{s-r+1}{q}.$$
\begin{theorem}
\thlabel{th:1}
   Let $V$ be a vector space of dimension $n$ over a finite field of size $q$. Let $K = \{k_1, \ldots , k_r\},L = \{\mu_1, \ldots , \mu_s\}$ be two disjoint subsets of $\{0,1, \ldots , b-1\}$ with $k_1 < \cdots < k_r$.  
   Let 
   $\mathcal{F} = \{V_1,V_2,\ldots,V_m\}$ be a family of subspaces of $V$ such that $\forall i \in [m]$, dim($V_i$) $\equiv k_t \pmod b$, for some $k_t \in K$; for every distinct $i,j \in [m]$, dim($V_i \cap V_j$) $ \equiv \mu_t \pmod b$, for some $\mu_t \in L$. Moreover, it is given that neither of the following two conditions hold:
   \begin{enumerate}[(i)]
   \item $q+1$ is a power of 2, and $b=2$
   \item $q=2, b=6$
   \end{enumerate}
 Then,
  \begin{equation*}
    		|\mathcal{F}| \leq \begin{cases}
    											\scalebox{0.7}{ $N(n,s,r,q)$}, \hskip 1cm \mbox{ if }\left(s+k_r \leq n \mbox { and } r(s-r+1) \leq b-1\right) \mbox{ or } (s < k_1 + r)  \\
    											\scalebox{0.7}{ $N(n,s,r,q) + \sum\limits_{t \in [r]}\qbin{n}{k_t}{q}$}, \hskip 1cm \text{otherwise.}
											\end{cases}    		
    		\end{equation*}
  \end{theorem}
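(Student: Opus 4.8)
The plan is to adapt the polynomial-space method of Alon--Babai--Suzuki to the vector space (Grassmannian) setting, using $q$-analogues of multilinear monomials. First I would fix a vector $v \in V$ lying outside all the subspaces $V_i$ (or, more robustly, work with the indicator-type functions directly on the lattice of subspaces) and, for each subspace $W$, associate a suitable ``characteristic polynomial''-style object whose evaluations encode $\dim(W \cap V_i) \bmod b$. The key algebraic tool should be the $q$-analogue of the identity $\binom{\dim(U \cap W)}{j}_q = $ (a sum over $j$-dimensional subspaces $T$ of $U$ of the indicator that $T \subseteq W$), which lets one express the condition ``$\dim(V_i \cap V_j) \equiv \mu_t \pmod b$ for some $t$'' as the vanishing of a product $\prod_{t \in [s]} \big(\text{something} - q^{\mu_t}\big)$ or of an analogous polynomial in the variables $x_T$ indexed by low-dimensional subspaces $T$. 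One then builds, for each $V_i$, a function $f_i$ in the span of these $q$-monomials such that $f_i(V_j) = 0$ for $j \neq i$ but $f_i(V_i) \neq 0$, forcing the $f_i$ to be linearly independent and bounding $|\mathcal{F}|$ by the dimension of the ambient space, which is exactly $N(n,s,r,q)$ (the sum of $q$-binomials $\qbin{n}{s}{q} + \cdots + \qbin{n}{s-r+1}{q}$ counting subspaces of dimension $s, s-1, \ldots, s-r+1$).

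The case distinction in the bound mirrors the ABS result: when $s < k_1 + r$ (the ``dimension-like'' hypothesis that each residue class $k_t$ is large compared to $s - r$), the clean bound $N(n,s,r,q)$ holds; otherwise one must add $\sum_{t \in [r]} \qbin{n}{k_t}{q}$ extra functions — intuitively, one augments the family of $f_i$ with the indicator functions of all subspaces of dimension $k_1, \ldots, k_r$ (in the ``true'' dimension, not mod $b$) to restore linear independence of the enlarged set, exactly as ABS add the monomials of the relevant degrees. The second sub-case of the first branch, $s + k_r \le n$ together with $r(s-r+1) \le b-1$, is where the modular arithmetic must genuinely be controlled: here $r(s-r+1) \le b-1$ ensures that a certain product of $\big(y - q^{\mu_t}\big)$ or binomial-type factors, after reduction, still lives in the span of $q$-monomials of the bounded ``degree'' $s$, so the clean bound survives even without the $s < k_1 + r$ hypothesis; and $s + k_r \le n$ guarantees the relevant $q$-binomials are nonzero so the construction doesn't degenerate.

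The main obstacle I expect is the number-theoretic side condition: the exceptional cases ``$q+1$ a power of $2$ with $b=2$'' and ``$q=2, b=6$'' must be excluded precisely because the argument needs a prime $p$ dividing $b$ such that the $q$-binomials behave well modulo $p$ — i.e.\ one works over $\mathbb{F}_p$ (or a field of characteristic $p$) and needs $q^j - 1 \not\equiv 0$ for the relevant small $j$, or more precisely needs the multiplicative order of $q$ modulo $p$ to be large enough relative to $s$ and $r$. Establishing that such a prime $p \mid b$ exists outside the two listed exceptions is a self-contained lemma (this is the $q$-analogue of the role played by a prime divisor of $b$ in ABS, where in the set case no such obstruction arises because one can always reduce mod a prime dividing $b$); the excluded cases are exactly those where every prime dividing $b$ forces $q^j \equiv 1$ for some small $j$, collapsing the polynomial space. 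Once that prime is in hand, the remaining steps — verifying the $f_i$ lie in the claimed span, checking $f_i(V_i) \neq 0 \bmod p$ via a $q$-Vandermonde/Lucas-type computation, and counting the dimension of the span to get $N(n,s,r,q)$ (plus the correction term when needed) — are the $q$-analogues of routine linear algebra, the only delicate point being to confirm the correction term $\sum_{t\in[r]}\qbin{n}{k_t}{q}$ really does suffice in the ``otherwise'' branch and that the augmented functions remain independent.
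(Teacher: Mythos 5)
Your overall framework (linear algebra over $\mathbb{F}_p$ with functions built from $q$-binomial expressions of intersection dimensions, plus a nonzero-diagonal/zero-off-diagonal independence argument) is the same as the paper's, but two of your key steps are misstated in ways that break the proof. First, the number-theoretic input: what is needed is a prime $p$ with $q^b \equiv 1 \pmod p$ and $q^i \not\equiv 1 \pmod p$ for all $0<i<b$, i.e.\ the multiplicative order of $q$ modulo $p$ is \emph{exactly} $b$; this is Zsigmondy's theorem, whose exceptional pairs are precisely the two excluded cases. A prime $p$ dividing $b$, or a prime with order merely ``large enough relative to $s$ and $r$,'' does not work: you need $q^b\equiv 1$ so that dimensions congruent mod $b$ produce equal values of $\qbin{\cdot}{1}{q}$ in $\mathbb{F}_p$, and you need no smaller power to be $1$ so that distinct residues produce distinct values (and in fact such a $p$ can never divide $b$, since $b\mid p-1$). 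As written, the ``self-contained lemma'' you plan to prove is the wrong one.

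Second, and more seriously, the dimension count and the role of the hypotheses. The span of the containment indicators indexed by subspaces of dimension at most $s$ has dimension $\sum_{j=0}^{s}\qbin{n}{j}{q}$, not $N(n,s,r,q)$, so linear independence of the $m$ functions $f_i$ alone only gives $m\le\sum_{j\le s}\qbin{n}{j}{q}$. To reach $N(n,s,r,q)$ the paper adds one auxiliary function for \emph{every} subspace of dimension at most $s-r$ (the indicator multiplied by $\prod_{t\in[r]}\bigl(\sum_j v^{1,j}-\qbin{k_t}{1}{q}\bigr)$) and proves the enlarged collection jointly independent (the ``swallowing trick''); subtracting $\sum_{j=0}^{s-r}\qbin{n}{j}{q}$ then yields $N(n,s,r,q)$. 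The delicate point is the independence of these auxiliary functions, which the paper establishes via M{\"o}bius inversion over the subspace lattice together with a ``gap'' argument, and this is exactly where $s+k_r\le n$ and $r(s-r+1)\le b-1$ are used (they force the set of dimensions $\equiv k_t \pmod b$ to have a gap of size at least $s-r+2$); they have nothing to do with keeping a product inside the degree-$s$ span (that span statement holds unconditionally) or with nonvanishing of $q$-binomials. Your account of the ``otherwise'' branch is also inverted: the correction $\sum_{t\in[r]}\qbin{n}{k_t}{q}$ appears because the auxiliary functions indexed by subspaces of dimension $k_t\le s-r$ must be \emph{omitted} (their extra factor vanishes on their own containment vector), so less is swallowed; augmenting an independent family with additional indicator functions, as you propose, could only lower the bound on $m$ and cannot produce the additive term. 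Without the swallowing trick and its M{\"o}bius/gap lemma, your plan only delivers the weaker bound $\sum_{j=0}^{s}\qbin{n}{j}{q}$.
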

 In Part B, we study a notion of fractional $L$-intersecting families which was introduced in \cite{BalRgmTap}. We say a family $\mathcal{F} = \{F_1, F_2, \ldots, F_m\}$ of subsets of $[n]$ is a \emph{fractional $L$-intersecting family}, where $L$ is a set of irreducible fractions between $0$ and $1$, if $\forall i,j \in [m], i \neq j$, $\frac{|F_i \cap F_j|}{|F_i|} \in L$  or  $\frac{|F_i \cap F_j|}{|F_j|} \in L$. In this paper, we extend this notion from subsets to subspaces of a vector space over a finite field. 
 \begin{definition}
 Let $L= \{\frac{a_1}{b_1}, \ldots , \frac{a_s}{b_s}\}$ be a set of positive irreducible fractions, where every $\frac{a_i}{b_i} < 1$. Let $\mathcal{F} = \{V_1, \ldots , V_m\}$ be  a family of subspaces of a vector space $V$ over a finite field. We say $\mathcal{F}$ is a \emph{fractional $L$-intersecting family of subspaces} if for every two distinct $i,j \in [m]$, $\frac{dim(V_i \cap V_j)}{dim(V_i)} \in L$  or  $\frac{dim(V_i \cap V_j)}{dim(V_j)} \in L$. 
 \end{definition}
 When every subspace in $\mathcal{F}$ is of dimension exactly $k$, it is an $L'$-intersecting family where $L'= \{\frac{a_1k}{b_1}, \ldots , \frac{a_sk}{b_s}\}$. Applying Theorem \ref{Frankl_Graham_subspace}, we get $|\mathcal{F}| \leq \qbin{n}{s}{q}$. A tight example to this is the collection of all $k$-dimensional subspaces of $V$ with $L=\{\frac{0}{k}, \ldots , \frac{k-1}{k}\}$.  However, the problem of bounding the cardinality of a fractional $L$-intersecting family of subspaces becomes more  interesting when $\mathcal{F}$ contains subspaces of various dimensions. In Part B of this paper, we obtain upper bounds for the cardinality of a fractional $L$-intersecting family of subspaces that are $q$-analogs of the results in \cite{BalRgmTap}. With the help of Theorem \ref{th:1} that we prove in Part A, we obtain the following result in Part B. 
 \begin{theorem}
 \thlabel{thm:frac_general_thm}
 Let $L = \{ \frac{a_1}{b_1},\frac{a_2}{b_2},\ldots,\frac{a_s}{b_s}\}$ be a collection of  positive irreducible fractions, where every $\frac{a_i}{b_i} < 1$. Let $\mathcal{F}$ be a fractional $L$-intersecting family of subspaces of a vector space $V$ of dimension $n$ over a finite field of size $q$. Let $t = \max\limits_{i \in [s]} b_i$, $g(t,n) = \frac{2(2t+\ln n)}{\ln (2t+\ln n)}$, and $h(t,n) = \min(g(t,n),\frac{\ln n}{\ln t})$. Then, 
 $$|\mathcal{F}| \leq 2g(t,n)h(t,n)\ln(g(t,n)) \qbin{n}{s}{q} + h(t,n) \sum\limits_{i=1}^{s-1} \qbin{n}{i}{q}.$$ Further, if $2g(t,n)\ln(g(t,n)) \leq n+2$, then 
 $$|\mathcal{F}| \leq 2g(t,n)h(t,n)\ln(g(t,n)) \qbin{n}{s}{q}.$$
 	
 	\end{theorem}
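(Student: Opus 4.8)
The plan is to reduce the fractional $L$-intersecting problem to the modular/non-modular setting governed by \thref{th:1}. The key observation is that if $\mathcal{F}$ is fractional $L$-intersecting with $L = \{a_1/b_1, \ldots, a_s/b_s\}$ and $t = \max_i b_i$, then a subspace $V_i \in \mathcal{F}$ of dimension $d$ can only have intersection dimensions with other members lying in the set $\{a_1 d/b_1, \ldots, a_s d/b_s\}$ whenever these are integers, and crucially $b_i \mid d$ is forced for the fraction $a_i/b_i$ to yield an integer. So the first step is to partition $\mathcal{F}$ according to the residue class of $\dim(V_i) \bmod \mathrm{lcm}(b_1, \ldots, b_s)$, or more cleverly, to first classify subspaces by which $b_i$ divide their dimension. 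I would follow the strategy of \cite{BalRgmTap}: pick a random prime $p$ (or random modulus) in a suitable range $[2t, 2t + \ln n]$ or so, and observe that for most choices of $p$, the dimensions of subspaces in $\mathcal{F}$ fall into few residue classes mod $p$, at which point the family restricted to a fixed residue class becomes (after the fraction $a_i/b_i$ is cleared) an honest $L'$-intersecting family modulo $p$ to which \thref{th:1} applies with parameter $s$ and some $r$.

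Concretely, the second step is the probabilistic argument: choosing a uniformly random prime $p$ in an interval of length $\Theta(t + \ln n)$ around $2t$, by a union bound the expected number of distinct residues $\dim(V_i) \bmod p$ taken over $\mathcal{F}$, or rather the number of "bad" pairs whose dimension difference is divisible by $p$, can be controlled; one shows there is a choice of $p$ for which $\mathcal{F}$ decomposes into at most $h(t,n)$ classes (this is where the bound $h(t,n) = \min(g(t,n), \ln n/\ln t)$ comes from — the first term from the prime-counting/expectation estimate giving $g(t,n)$, the second from the trivial bound that $\dim$ takes at most $n$ values and if $t$ is large each residue class mod a number $\geq t$ is sparse). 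Within each class, the dimensions are congruent mod $p$, and since $p \geq 2t > b_i$ for all $i$, the cleared intersection values $a_i d / b_i$ reduce mod $p$ to a fixed set $L$ of size $s$ disjoint from the (single) dimension residue $k$; thus \thref{th:1} (the case $s < k_1 + r$ or the first branch) gives $|\mathcal{F}_{\text{class}}| \leq N(n,s,r,q) \leq s \qbin{n}{s}{q}$ roughly, or in the non-modular-friendly case just $\qbin{n}{s}{q} + \sum_{i=1}^{s-1}\qbin{n}{i}{q}$.

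The third step is bookkeeping: sum over the at most $h(t,n)$ classes, and over the at most $g(t,n)$-ish choices of good prime / good residues, collecting the factor $2g(t,n)h(t,n)\ln(g(t,n))$ in front of $\qbin{n}{s}{q}$ and $h(t,n)$ in front of $\sum_{i=1}^{s-1}\qbin{n}{i}{q}$. The refinement "if $2g(t,n)\ln(g(t,n)) \leq n+2$ then the lower-order sum drops out" should come from noting that under that hypothesis the first branch of \thref{th:1} (the clean bound $N(n,s,r,q)$ with no additive $\sum \qbin{n}{k_t}{q}$ correction) is the one that applies in every class — the condition $2g(t,n)\ln(g(t,n)) \le n+2$ being exactly what forces $s + k_r \le n$ and $r(s-r+1) \le b-1$ (with $b \approx p$) or $s < k_1 + r$ to hold — so one only ever pays $N(n,s,r,q) \le s\qbin{n}{s}{q}$, absorbed into the leading coefficient.

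The main obstacle I anticipate is the probabilistic/number-theoretic core: carefully choosing the modulus $p$ (prime, or possibly just any integer) so that simultaneously (a) $p > 2t$ so that all denominators $b_i$ are invertible mod $p$ and the fractions clear cleanly, (b) the number of residue classes of $\{\dim(V_i)\}$ mod $p$ that are actually realized, or that interact badly, is small — of order $h(t,n)$ — and (c) the set $L$ mod $p$ stays disjoint from each dimension residue so that \thref{th:1} genuinely applies (one must rule out the excluded cases "$q+1$ a power of $2$ and $b=2$" and "$q=2, b=6$", which is automatic once $p \geq 2t \geq 4$, say, or is handled by enlarging the interval slightly). Getting the precise form of $g(t,n) = \frac{2(2t+\ln n)}{\ln(2t + \ln n)}$ to fall out of the prime-counting estimate (Chebyshev-type bounds on the number of primes in $[2t, 2t+\ln n]$, or rather on how many can divide a fixed nonzero integer of size $< n$) is the delicate quantitative part; the rest is assembling \thref{th:1} over the pieces, which is routine.
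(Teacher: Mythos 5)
Your reduction of a fixed residue class to \thref{th:1} is essentially the paper's Lemma~\ref{lem:51} (take $K=\{k\}$, $r=1$, modulus a prime $p>t$, and $L$ replaced by the residues $a_ib_i^{-1}k \bmod p$, which are distinct from $k$ precisely because $k\neq 0$ and $p>b_i>a_i$), and your reading of the hypothesis $2g(t,n)\ln(g(t,n))\leq n+2$ as the guarantee that $s+k\leq n$ (since $s,k\leq p-1$) is also correct. But the core mechanism is missing: the real obstruction is the \emph{zero} residue class, i.e.\ the subspaces whose dimension is divisible by the chosen prime, where $K=\{0\}$ and $0\in L$ are no longer disjoint and \thref{th:1} cannot be invoked at all. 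Your proposal never bounds that class. The issue you do try to control --- ``bad pairs whose dimension difference is divisible by $p$'', or keeping the number of realized residue classes small --- is a non-issue: every nonzero class is bounded by $\qbin{n}{s}{q}$ (plus lower-order terms) no matter how many classes are realized, and the claim that for most primes the dimensions occupy few residue classes is both unjustified and unnecessary.

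The paper's argument is deterministic, not probabilistic: take the consecutive primes $p_{\alpha+1}<\cdots<p_{\beta}$ just above $t$ whose product exceeds $n$; apply Lemma~\ref{lem:51} with $p_{\alpha+1}$ to bound everything outside $\mathcal{F}_0^{p_{\alpha+1}}$ by $(p_{\alpha+1}-1)\qbin{n}{s}{q}$ plus (when $2p\leq n+2$ fails) at most $\sum_{i=1}^{s-1}\qbin{n}{i}{q}$ (the additive $\qbin{n}{k}{q}$ only occurs for $k\leq s-1$); then recurse on $\mathcal{F}_0^{p_{\alpha+1}}$ with $p_{\alpha+2}$, and so on. The chain terminates because a positive dimension at most $n$ cannot be divisible by all primes in the set. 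This iteration is exactly where the constants come from: the number of primes needed is at most $g(t,n)$ (primorial estimate $p_\beta\#/t\#>n$ with $p_l\#=e^{(1+o(1))l\ln l}$, not a count of primes in an interval $[2t,2t+\ln n]$) and also at most $\ln n/\ln t$ (each prime exceeds $t$), giving the factor $h(t,n)$; and each prime used is at most $2g(t,n)\ln(g(t,n))$ by the Prime Number Theorem, giving the per-step cost. Without an iteration over a family of primes whose product exceeds $n$ (or some substitute handling of dimensions divisible by your single modulus), your argument as written does not yield the stated bound.
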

 \begin{example} \label{exam:frac_gen} Let $s$ be a constant,  $L= \{\frac{0}{s},\frac{1}{s},\ldots,\frac{s-1}{s}\}$, and  $\mathcal{F}$ be the family of all the $s$-sized subspaces of $V$. Clearly, $\mathcal{F}$ is a fractional $L$-intersecting family  showing that the bound in Theorem \ref{thm:frac_general_thm} is asymptotically tight up to a multiplicative factor of $\frac{\ln^2 n}{\ln \ln n}$.	
\end{example}
We improve the bound obtained in Theorem  \ref{thm:frac_general_thm} for the special case when $L=\{\frac{a}{b}\}$, where $b$ is a prime. 

\begin{theorem}
\thlabel{th:61}
   Let $L=\{\frac{a}{b}\}$, where $\frac{a}{b}$ is a positive irreducible fraction less than $1$ and $b$ is a prime. Let  $\mathcal{F}$ be a fractional $L$-intersecting family of subspaces of a vector space $V$ of dimension $n$ over a finite field of size $q$. Then, we have $ |\mathcal{F}| \leq (b-1)( \qbin{n}{1}{q} +1) \lceil \frac{\ln n}{\ln b} \rceil + 2$.
  \end{theorem}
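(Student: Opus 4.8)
The plan is to mimic the set-theoretic argument that bounds a single-fraction fractional $L$-intersecting family by a logarithmic-sized union of $L$-intersecting (in the modular sense) subfamilies, and then apply Theorem~\ref{Frankl_Graham_subspace} (the Frankl--Graham bound) to each piece. First I would split $\mathcal{F}$ according to dimension: let $\mathcal{F}_k = \{V \in \mathcal{F} : \dim V = k\}$. For a pair $V_i, V_j$ with $\dim V_i \le \dim V_j$, the hypothesis gives $\dim(V_i \cap V_j) = \frac{a}{b}\dim V_i$ or $\dim(V_i \cap V_j) = \frac{a}{b}\dim V_j$; since $b \nmid$ most dimensions this already forces strong divisibility constraints on which $\mathcal{F}_k$ can be nonempty, and in particular $b \mid k$ whenever $|\mathcal{F}_k| \ge 2$ and a pair inside $\mathcal{F}_k$ realizes its own dimension in the denominator.

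Next I would partition the nonempty dimension classes into $\lceil \ln n / \ln b \rceil$ groups by the exponent of $b$ dividing $k$ (or, more carefully, by intervals $[b^j, b^{j+1})$), so that within one group the ratios $\frac{a}{b}\dim V_i$ relative to a fixed modulus behave controllably. The key structural observation to extract is: within a suitable sub-collection, all pairwise intersection dimensions lie in a residue system mod $b$ that \emph{avoids} the residue of the ambient subspace dimensions mod $b$, so Theorem~\ref{Frankl_Graham_subspace} applies with $s = b-1$ (the worst case has the single-element "modular $L$" blown up to at most $b-1$ residues, bounded by $\qbin{n}{b-1}{q}$), but we also need the non-uniform slack, which is why the bound carries the additive $+1$ and the $\qbin{n}{1}{q}$ factor rather than $\qbin{n}{b-1}{q}$. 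Actually, since $b$ is prime, a cleaner route is: fix one member $W$ of a class, look at $\dim(V \cap W)$ over all other $V$, deduce these take at most $b-1$ distinct values within a logarithmic regrouping, and iterate — a "sunflower-free"/"two-distance"-style peeling that loses one $\lceil \ln n/\ln b\rceil$ factor and one $(b-1)$ factor at each of the two coordinates (which subspace supplies the denominator), giving the product $(b-1)(\qbin{n}{1}{q}+1)\lceil \ln n/\ln b\rceil$, plus a constant $+2$ absorbing the trivial classes (dimensions $0$ and $n$, or a stray pair that cannot be grouped).

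The main obstacle I anticipate is controlling the interaction between the two possible denominators: a pair $\{V_i,V_j\}$ may satisfy the fraction condition via $\dim V_i$ for one neighbor and via $\dim V_j$ for another, so neither $\mathcal{F}$ nor a single dimension class is cleanly $L'$-intersecting for a fixed $L'$. Handling this requires building an auxiliary digraph on $\mathcal{F}$ (edge $V_i \to V_j$ when $\dim(V_i\cap V_j) = \frac{a}{b}\dim V_i$), observing that every edge is oriented at least one way, and bounding the out-neighborhoods: the out-neighbors of a fixed $V$ all have $\dim(V\cap \cdot)$ equal to the \emph{single} value $\frac{a}{b}\dim V$, while its in-neighbors have $\dim(\cdot \cap V)$ ranging over dimensions $k'$ with $\frac{a}{b}k' $ equal to one of at most $\lceil \ln n/\ln b\rceil$ prescribed values (after regrouping), so a Frankl--Graham estimate (with $b$ prime ensuring the non-divisibility hypothesis holds for the relevant residues) caps each such neighborhood by roughly $(b-1)\qbin{n}{1}{q}$. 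Summing the degree bound over a logarithmic number of levels, together with the standard inequality $|\mathcal{F}| \le 1 + \max\text{-degree}$ for such structured digraphs, yields the stated bound; the bookkeeping of the additive constants and verifying the Frankl--Graham exceptional cases ($q=2,b=6$) do not intervene (here $b$ is prime, so $b\ne 6$) is routine.
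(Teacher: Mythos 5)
Your decomposition is on the right track: at most one member of $\mathcal{F}$ can have dimension not divisible by $b$ (two such members would force a non-integer intersection dimension), and grouping the remaining members by the $b$-adic valuation of their dimension (and, as the paper does, also by the leading base-$b$ digit $j$) is exactly what produces the $(b-1)\lceil \ln n/\ln b\rceil$ factor; the $+2$ for the stray member and the zero subspace is also right. The gap is in the heart of the argument: you never establish the per-class bound $|\mathcal{F}^{j,k}| \leq \qbin{n}{1}{q}+1$, and the tools you invoke cannot deliver it. Theorem \ref{Frankl_Graham_subspace} requires a \emph{uniform} family and yields $\qbin{n}{s}{q}$, where $s$ counts intersection residues, not $\qbin{n}{1}{q}$; the classes produced by your grouping are non-uniform, and within a single exact-dimension class you would get $\qbin{n}{1}{q}$ but then pay a factor of order $n/b$ (the number of dimensions), not $\log_b n$. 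The digraph patch does not close this: the out-neighbours of a fixed $V$ share only their intersection dimension \emph{with $V$}, which says nothing about their pairwise intersections, so no $L$-intersecting theorem bounds that neighbourhood; and the inequality ``$|\mathcal{F}| \le 1 + \max$-degree'' you appeal to is not a valid principle for a tournament-like orientation (at best one gets $m \le 2\Delta^{+}+1$, and only after the neighbourhood bound you have not proved).

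What the paper actually does for each class $\mathcal{F}^{j,k}$ (all dimensions of the form $rb^{k+1}+jb^{k}$) is a direct rank/determinant computation: with $M$ the inclusion matrix of members versus $1$-dimensional subspaces, every entry of $N=MM^{T}$ is divisible by $\qbin{b^{k-1}}{1}{q}$, and after dividing it out, the diagonal entries are $\equiv 0$ and all off-diagonal entries are $\equiv \qbin{r_3}{1}{q^{b^{k-1}}}$ with $r_3 = ja \bmod b \neq 0$, modulo $\qbin{b}{1}{q^{b^{k-1}}}$; the explicit formula for the determinant of such a matrix (and of its leading principal minor) shows one of them is nonzero, whence $\mathrm{rank}(N)\geq m^{j,k}-1$ and $m^{j,k}\leq \mathrm{rank}(M)+1\leq \qbin{n}{1}{q}+1$. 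If you prefer to reuse the modular machinery, one can instead apply \thref{th:1} to $\mathcal{F}^{j,k}$ with modulus $b^{k}$, $K=\{0\}$ and $L=\{jab^{k-1} \bmod b^{k}\}$, which gives $\qbin{n}{1}{q}$ per class --- but this route genuinely fails in the Zsigmondy exceptional case $b=2$, $k=1$, $q+1$ a power of $2$, which is precisely why the paper's proof goes through the determinant argument rather than through \thref{th:1}. Some such concrete rank bound is the missing ingredient your proposal needs.
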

 \begin{example}
 \label{exam:frac_spe}
 Let $L=\{\frac{1}{2}\}$. Let $V$ be a vector space of dimension $n$ over a finite field of size $q$. Let $\{v_1, v_2, \ldots , v_n\}$ be a basis of $V$. Let $V' := span(\{v_2, \ldots , v_n\})$ be an $(n-1)$-dimensional subspace of $V$. Let $\mathcal{F}$ be the set of all $\qbin{n-1}{1}{q}$ $2$-dimensional subspaces of $V$ each of which is obtained by a span of $v_1$ and each of the  $\qbin{n-1}{1}{q}$ $1$-dimensional subspaces of $V'$. This example shows that when $b$ and $q$ are constants, the bound in Theorem \ref{th:61} is asymptotically tight up to a multiplicative factor of $\ln n$. 
 \end{example} 
  
  


\end{section}

\begin{center}\section*{PART A: Generalized modular RW Theorem for subspaces}\end{center}
As mentioned before, in this part we prove \thref{th:1}. The  approach followed here is similar to the approach used in proving Theorem 1.5, a generalized modular Ray-Chaudhuri-Wilson Theorem for subsets, in \cite{ALON1991165}. We start by stating the Zsigmondy's Theorem which will be used in the proof of Theorem \ref{th:1}. 
\begin{theorem}[\cite{zsigmondy1892theorie}]
\thlabel{thm:Zsigmondy}
For any $q,b \in \mathbb{N}$, there exists a prime $p$ such that $q^b \equiv 1 \pmod p, q^i \not\equiv 1 \pmod p ~\forall i,~0<i<b$, except when 
(i) $q+1$ is a power of $2$, $b=2$, or 
(ii) $q=2,b=6$. 
\end{theorem}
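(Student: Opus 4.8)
The plan is to deduce Zsigmondy's theorem from the classical cyclotomic-polynomial argument (Birkhoff--Vandiver). Write $q^b-1=\prod_{d\mid b}\Phi_d(q)$, where $\Phi_d\in\mathbb{Z}[x]$ is the $d$-th cyclotomic polynomial, of degree $\varphi(d)$, so that $\Phi_b(q)=\prod_{\zeta}(q-\zeta)$ over the primitive $b$-th roots of unity $\zeta$. A prime $p$ with $q^b\equiv 1$ and $q^i\not\equiv 1\pmod p$ for $0<i<b$ is precisely a prime with $\operatorname{ord}_p(q)=b$; any such $p$ divides $\Phi_b(q)$, and $b=\operatorname{ord}_p(q)\mid p-1$ forces $b<p$, hence $p\nmid b$. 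So (for $b\ge 2$; the case $b=1$ is trivial when $q\ge 3$) it suffices to produce a prime factor of $\Phi_b(q)$ not dividing $b$.

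Next I would pin down the possible non-primitive prime factors of $\Phi_b(q)$. Using the Lifting-the-Exponent lemma --- with $p=2$ treated separately --- one shows that if $p\mid\Phi_b(q)$ and $e:=\operatorname{ord}_p(q)<b$, then $b/e$ is a power of $p$; in particular $p\mid b$, and since any prime factor of $b$ larger than $p$ would have to divide $e\mid p-1$ (impossible), $p$ must be the largest prime factor $P$ of $b$, with $v_P(\Phi_b(q))=1$ when $P$ is odd. Thus, apart from at most one factor of $P$, every prime dividing $\Phi_b(q)$ has order exactly $b$ and is a primitive prime divisor; hence a primitive prime divisor is absent only if $\Phi_b(q)\in\{1,P\}$. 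The case $P=2$, i.e.\ $b=2^c$, is checked directly from $\Phi_{2^c}(q)=q^{2^{c-1}}+1$ (no bad odd prime factor, and $v_2\le 1$ for $c\ge 2$), so only $b=2$ is delicate there: a primitive prime is missing exactly when $q+1$ is a power of $2$.

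The crux is the size estimate $\Phi_b(q)>P$ for $b\ge 3$ with an odd prime factor (and $(q,b)$ outside the exceptions). Starting from $\Phi_b(q)=\prod_{\zeta}|q-\zeta|>(q-1)^{\varphi(b)}$ --- strict since no primitive $b$-th root equals $1$ --- and using $\varphi(b)\ge\sqrt b$ for $b\notin\{2,6\}$, one gets $(q-1)^{\varphi(b)}\ge 2^{\sqrt b}>b\ge P$ for $q\ge 3$ and all but a handful of small $b$, which are checked by hand. For $q=2$ the bound $(q-1)^{\varphi(b)}=1$ is useless, so one needs the geometric refinement that only the conjugate pair $e^{\pm 2\pi i/b}$ nearest $1$ contributes a factor close to $1$ (all other $|2-\zeta|$ are bounded away from $1$), giving $\Phi_b(2)>b\ge P$ outside a short explicit list; checking that list is where $q=2,b=6$ (with $\Phi_6(2)=3=P$) survives as the last exception.

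I expect the size estimate to be the main obstacle: a crude bound like $(q-1)^{\varphi(b)}$ collapses at $q=2$, so one genuinely must argue that at most one primitive root lies near the positive real axis and combine this with a clean lower bound for $\varphi(b)$ in terms of $b$; keeping the bookkeeping tight enough that exactly the two advertised families ($q+1$ a power of $2$ with $b=2$, and $(q,b)=(2,6)$) --- and no others --- remain as exceptions is the delicate part. The cyclotomic factorization, the order/Lifting-the-Exponent lemma, and the $p=2$ and $b=2^c$ special cases are all routine.
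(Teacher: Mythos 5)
The paper never proves this statement: it is imported as Zsigmondy's classical theorem with a citation to Zsigmondy (1892), so there is no in-paper argument to compare with. Your proposal is the standard Birkhoff--Vandiver cyclotomic proof, and its skeleton is right: the factorization $q^b-1=\prod_{d\mid b}\Phi_d(q)$; the observation that a prime with $\operatorname{ord}_p(q)=b$ is exactly a prime divisor of $\Phi_b(q)$ not dividing $b$; the lemma (via lifting the exponent) that a non-primitive prime divisor of $\Phi_b(q)$ must be the largest prime factor $P$ of $b$ and, when odd, divides $\Phi_b(q)$ only to the first power; the separate analysis of $b=2^c$, with $b=2$ producing precisely the exception ``$q+1$ a power of $2$''; and the reduction of everything else to the size estimate $\Phi_b(q)>P$. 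One small remark: as stated in the paper the theorem also fails for $(q,b)=(2,1)$ (and degenerately for $q=1$); you implicitly assume $b\ge 2$, $q\ge 2$, which is all the paper uses, but it deserves a sentence.

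The one genuinely shaky step is your justification of the size estimate at $q=2$. It is not true that only the conjugate pair $e^{\pm 2\pi i/b}$ contributes a factor $|2-\zeta|$ close to $1$: for large $b$ every primitive root $e^{\pm 2\pi i k/b}$ with $k$ small compared to $b$ does, so ``all other factors are bounded away from $1$'' fails as stated, and turning the geometric idea into a uniform bound requires counting residues coprime to $b$ in a prescribed arc, which is not a triviality (inclusion--exclusion errors of size $2^{\omega(b)}$ can swamp the main term for moderate $b$). The standard repair avoids the geometry of the roots entirely: from $\Phi_b(q)=\prod_{d\mid b}\bigl(q^{b/d}-1\bigr)^{\mu(d)}$ one gets $\Phi_b(2)\ \ge\ 2^{\varphi(b)}\prod_{j\ge 1}\bigl(1-2^{-j}\bigr)\ >\ 2^{\varphi(b)-2}$, since the exponents $b/d$ with $\mu(d)=1$ are distinct positive integers and discarding the factors with $\mu(d)=-1$ only decreases the value; then $2^{\varphi(b)-2}>b\ge P$ for all $b$ outside an explicit finite list, which is checked by hand and leaves only $(q,b)=(2,6)$ (where $\Phi_6(2)=3=P$). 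With that substitution (or an honest quantitative version of your pairing argument), your outline is a complete and correct proof of the cited theorem.
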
  
\subsection{Notations used in PART A}
Unless defined explicitly, in the rest of Part A, the symbols $K = \{k_1, \ldots , k_r\}$, $r$, $L = \{\mu_1, \ldots , \mu_s\}$, $s$, $q$, $V$, $\mathcal{F}$, $n$, $b$, $m$, and $V_1, \ldots , V_m$ are defined as they are defined in Theorem \ref{th:1}. 
   We shall use $U \subseteq V$ to denote that $U$ is a subspace of $V$. Using Zsigmondy's Theorem, we find a prime $p$ so that $q^i \not\equiv 1 \pmod p$ for $ 0 < i < b$ and $q^b \equiv 1 \pmod p$. This is possible except in the two cases specified  in \thref{thm:Zsigmondy}. We ignore these two cases from now on in the rest of Part A. 

\subsection{M{\"o}bius inversion over the subspace poset}
Consider the partial  order defined on the set of subspaces of the vector space $V$ over a finite field of size $q$ under the `containment' relation. Let $\alpha$ be a  function from the set of subspaces of $V$ to $\mathbb{F}_p$. A function  $\beta$ from the set of subspaces of $V$ to $\mathbb{F}_p$ is the \emph{zeta transform} of $\alpha$ if $\forall W \subseteq V,~\beta(W) = \sum_{U \subseteq W} \alpha(U)$. Then, applying the \emph{M{\"o}bius inversion formula} we get $\forall W \subseteq V$, $\alpha(W) = \sum_{ \substack{U\subseteq W}} \mu(U,W) \beta(U)$, where $\alpha$ is called the \emph{M{\"o}bius  transform} of $\beta$ and $\mu(U,W)$ is  the \emph{M{\"o}bius function} for the subspace poset. In the proposition below, we show that the \emph{M{\"o}bius function} for the subspace poset is defined as 
\begin{equation*}
    		\mu(X,Y) =   \begin{cases}
    											\scalebox{0.7}{ $(-1)^dq^{{d \choose 2}}$},  \mbox{ if }X \subseteq Y  \\
    											\scalebox{0.7}{ $0$}, \hskip 1cm \text{otherwise}
											\end{cases}    		
    		\end{equation*}
, $\forall X,Y \subseteq V$ with $d = dim(Y) - dim(X)$. 
\begin{proposition}\thlabel{prop:1}
	Let $\alpha$ and $\beta$ be functions from the set of subspaces of $V$ to $\mathbb{F}_p$. 
	Then, $\forall\ W \subseteq V$,
	   
   \begin{gather*}
    \beta(W) = \sum_{U\subseteq W} \alpha(U) \iff \alpha(W) = \sum_{ \substack{U\subseteq W \\ d= dim(W)-dim(U)}} (-1)^d q^{\frac{d(d-1)}{2}} \beta(U)
   \end{gather*}
   \end{proposition}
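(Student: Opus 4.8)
The plan is to sidestep the abstract incidence-algebra machinery and verify the stated equivalence by direct substitution. Plugging $\beta(W)=\sum_{U\subseteq W}\alpha(U)$ into the right-hand side and interchanging the order of summation, the desired identity $\alpha(W)=\sum_{U\subseteq W}\mu(U,W)\beta(U)$ (with $\mu(U,W)=(-1)^{d}q^{d(d-1)/2}$, $d=\dim W-\dim U$) reduces to the single ``orthogonality'' relation
\[
\sum_{T\subseteq U\subseteq W}\mu(U,W)=\begin{cases}1 & \text{if }T=W,\\ 0 & \text{if }T\subsetneq W,\end{cases}
\]
for every $T\subseteq W$; the reverse implication reduces in the same way to the companion relation $\sum_{T\subseteq U\subseteq W}\mu(T,U)=[T=W]$. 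So the whole proposition comes down to proving these relations for the proposed kernel.

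To do that I would first record the standard description of intervals in the subspace poset: for $X\subseteq Y$ the interval $[X,Y]$ is poset-isomorphic to the lattice of all subspaces of the $d$-dimensional quotient $Y/X$ via $U\mapsto U/X$. In particular, the number of $U$ with $X\subseteq U\subseteq Y$ and $\dim U = i$ equals $\qbin{d}{i}{q}$, and $\mu(U,W)$ as proposed depends only on $\dim W-\dim U$. Grouping the summands of $\sum_{T\subseteq U\subseteq W}\mu(U,W)$ by dimension, writing $d=\dim W-\dim T$, $j=\dim W-\dim U$, and using $\qbin{d}{i}{q}=\qbin{d}{d-i}{q}$, the orthogonality relation becomes exactly the $q$-binomial identity
\[
\sum_{j=0}^{d}(-1)^{j}q^{j(j-1)/2}\,\qbin{d}{j}{q}=\begin{cases}1 & d=0,\\ 0 & d\ge 1,\end{cases}
\]
and the companion relation reduces to the same identity.

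This identity is classical: it is the evaluation at $x=1$ of the finite $q$-binomial theorem $\prod_{i=0}^{d-1}(1-q^{i}x)=\sum_{j=0}^{d}(-1)^{j}q^{j(j-1)/2}\qbin{d}{j}{q}\,x^{j}$, since for $d\ge 1$ the factor $i=0$ on the left is $1-x$, which vanishes at $x=1$, while for $d=0$ the left side is the empty product $1$; alternatively it follows by a one-line induction on $d$ from the Pascal-type recurrence $\qbin{d}{j}{q}=q^{j}\qbin{d-1}{j}{q}+\qbin{d-1}{j-1}{q}$. The only genuine content is thus this identity together with the interval description $[X,Y]\cong Y/X$; the passage from these to the stated equivalence is pure bookkeeping, and I expect the $q$-binomial identity to be the one step a reader would actually want spelled out. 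Finally, the fact that $\alpha,\beta$ take values in $\mathbb{F}_p$ rather than in $\mathbb{Z}$ or $\mathbb{Q}$ causes no trouble: the coefficients $(-1)^{j}q^{j(j-1)/2}$ and the $q$-binomial identity above are identities of integers, which we simply read modulo $p$.
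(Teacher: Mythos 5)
Your proposal is correct and follows essentially the same route as the paper's proof: substitute one formula into the other, interchange the order of summation, count the subspaces in the interval $[T,W]$ of a fixed dimension by a Gaussian binomial coefficient, and conclude via the alternating-sum identity $\sum_{j=0}^{d}(-1)^{j}q^{j(j-1)/2}\qbin{d}{j}{q}=[d=0]$. The only difference is cosmetic: the paper simply quotes this identity, while you also supply its proof from the $q$-binomial theorem and note explicitly that it passes to $\mathbb{F}_p$.
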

   \begin{proof}
   Proof in Appendix \ref{append_proof_of_prop_inversion_formula}. 
   \end{proof}   
   
   \begin{definition}
    Given two subspaces $U$ and $W$ of the vector space $V$, we define their union space $U \cup W$ as the span of union of sets of vectors in $U$ and $W$.
   \end{definition}
   \begin{proposition}
   \label{prop:mob_inv_gen}
    Let $\alpha$ and $\beta$ be functions as defined in \thref{prop:1}. 
    Then, $\forall\ W,Y$ such that $W \subseteq Y \subseteq V$,
    \begin{equation*}
     \sum_{\substack{T :\ W \subseteq T \subseteq Y \\ d=dim(Y)-dim(T)}} (-1)^d q^{\frac{d(d-1)}{2}} \beta(T) = \sum_{U:\ U \cup W = Y} \alpha(U)
    \end{equation*}
   \end{proposition}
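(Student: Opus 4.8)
The plan is to expand the left-hand side using the hypothesis $\beta(T)=\sum_{U\subseteq T}\alpha(U)$ and then interchange the order of summation, exactly as one does for ordinary M\"obius inversion; note that \thref{prop:1} is the special case of this proposition in which $W$ is the zero subspace. Writing $e(T):=\dim(Y)-\dim(T)$, the left-hand side becomes
\[
\sum_{W\subseteq T\subseteq Y}(-1)^{e(T)}q^{\binom{e(T)}{2}}\sum_{U\subseteq T}\alpha(U)
=\sum_{U\subseteq V}\alpha(U)\sum_{\substack{W\subseteq T\subseteq Y\\ U\subseteq T}}(-1)^{e(T)}q^{\binom{e(T)}{2}}.
\]
The two constraints $W\subseteq T$ and $U\subseteq T$ hold simultaneously if and only if $U\cup W\subseteq T$, so the inner sum ranges over all $T$ with $U\cup W\subseteq T\subseteq Y$; in particular it is an empty sum (hence $0$) unless $U\cup W\subseteq Y$, i.e.\ unless $U\subseteq Y$.

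The second step is to establish the key identity: for any pair of subspaces $A\subseteq B$ of $V$,
\[
\sum_{A\subseteq T\subseteq B}(-1)^{\dim(B)-\dim(T)}q^{\binom{\dim(B)-\dim(T)}{2}}=\begin{cases}1,&\text{if }A=B,\\ 0,&\text{otherwise.}\end{cases}
\]
To see this, group the subspaces $T$ in the interval $[A,B]$ of the subspace poset by $j:=\dim(T)-\dim(A)$. Since that interval is isomorphic to the lattice of all subspaces of a vector space of dimension $e:=\dim(B)-\dim(A)$ over the field of size $q$, there are exactly $\qbin{e}{j}{q}$ such $T$, and the sum reduces to $\sum_{i=0}^{e}(-1)^{i}q^{\binom{i}{2}}\qbin{e}{i}{q}$, which equals $\delta_{e,0}$ by the $q$-binomial theorem — the same identity underlying \thref{prop:1} (all of these being identities of integers, read modulo $p$).

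Applying this identity with $A=U\cup W$ and $B=Y$ — legitimate precisely when $U\subseteq Y$, and consistent with the vanishing noted above when $U\not\subseteq Y$ — the inner sum equals $1$ exactly when $U\cup W=Y$ and $0$ otherwise. Hence the left-hand side collapses to $\sum_{U:\,U\cup W=Y}\alpha(U)$, which is the right-hand side. The only mildly delicate point is the justification that an interval $[A,B]$ of the subspace poset behaves like the full subspace lattice of an $\mathbb{F}_q$-space of dimension $\dim(B)-\dim(A)$, so that the count of intermediate subspaces of each dimension is the appropriate $q$-binomial coefficient, together with correctly tracking the empty-sum cases; the remainder is a routine rearrangement. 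One could alternatively deduce the statement from \thref{prop:1} by a change of variables once that proposition is in hand, but the self-contained computation above seems cleanest.
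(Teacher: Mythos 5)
Your proof is correct and follows essentially the same route as the paper's: expand $\beta(T)$ via the hypothesis, swap the order of summation, observe that the inner sum runs over all $T$ with $U\cup W\subseteq T\subseteq Y$, and collapse it using the alternating $q$-binomial identity $\sum_{d}(-1)^d q^{\binom{d}{2}}\qbin{e}{d}{q}=\delta_{e,0}$. Your explicit remark about the interval $[A,B]$ being isomorphic to a full subspace lattice (hence the $\qbin{e}{j}{q}$ count) is the same step the paper uses implicitly, so there is no substantive difference.
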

   \begin{proof}
    Proof in Appendix \ref{append_proof_of_prop_mob_inv_gen}
    \end{proof}
   \begin{corollary}\thlabel{cor:1}
    For any non-negative integer $g$, the following are equivalent for functions $\alpha$ and $\beta$ defined in \thref{prop:1}:
    \begin{enumerate}[(i)]
     \item $\alpha(U) = 0$, $\forall U \subseteq V$ with $dim(U) \geq g$.
     \item $\sum\limits_{\substack{W \subseteq T \subseteq Y \\ d=dim(Y)-dim(T)}} (-1)^d q^{\frac{d(d-1)}{2}} \beta(T) = 0$, $\forall W,Y \subseteq V$ with $dim(Y) - dim(W) \geq g$.
    \end{enumerate}
   \end{corollary}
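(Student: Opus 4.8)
The plan is to deduce Corollary~\ref{cor:1} directly from Proposition~\ref{prop:mob_inv_gen}, which already provides the bridge between the alternating $q$-weighted sums of $\beta$ over intervals $[W,Y]$ and the unweighted sums of $\alpha$ over subspaces $U$ with $U \cup W = Y$. Concretely, for any $W \subseteq Y \subseteq V$ write
\[
S(W,Y) \;:=\; \sum_{\substack{W \subseteq T \subseteq Y \\ d = \dim(Y)-\dim(T)}} (-1)^d q^{\frac{d(d-1)}{2}} \beta(T) \;=\; \sum_{U :\ U \cup W = Y} \alpha(U),
\]
the equality being exactly Proposition~\ref{prop:mob_inv_gen}. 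So it suffices to show that $(i)$ (namely $\alpha(U)=0$ whenever $\dim(U)\ge g$) is equivalent to $S(W,Y)=0$ for all $W,Y$ with $\dim(Y)-\dim(W)\ge g$.

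For the direction $(i)\Rightarrow(ii)$: fix $W \subseteq Y$ with $\dim(Y)-\dim(W)\ge g$, and consider any $U$ appearing in the right-hand sum, i.e.\ $U \cup W = Y$. Then $\dim(U) \ge \dim(U\cup W) - \dim(W) = \dim(Y)-\dim(W) \ge g$ (using $\dim(U) \ge \dim(U\cup W) - \dim(W)$, which follows from $\dim(U\cup W)\le \dim(U)+\dim(W)$). Hence $\alpha(U)=0$ for every term in the sum, so $S(W,Y)=\sum_{U:\,U\cup W=Y}\alpha(U)=0$, which is $(ii)$.

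For the converse $(ii)\Rightarrow(i)$: assume $S(W,Y)=0$ whenever $\dim(Y)-\dim(W)\ge g$, and suppose towards a contradiction that $\alpha(U_0)\ne 0$ for some $U_0$ with $\dim(U_0)\ge g$. I would pick such a $U_0$ of \emph{maximum} dimension, then take $W := U_0$ and $Y := U_0$; then $\dim(Y)-\dim(W)=0$ is generally not $\ge g$, so this naive choice fails, and the real work is choosing $W,Y$ so that $U_0$ is the unique — or the dimension-maximal — subspace with $U \cup W = Y$. The natural choice is $W$ a hyperplane-type complement situation: take any subspace $W$ with $U_0 \cup W = V$ (e.g.\ a complement of $U_0$, so $\dim W = n - \dim U_0$) and set $Y := V$; then $\dim(Y)-\dim(W) = \dim(U_0) \ge g$, so $S(W,V)=0$. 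Now $S(W,V) = \sum_{U:\,U\cup W = V}\alpha(U)$, and among the $U$ with $U \cup W = V$ we have $\dim(U) \ge n - \dim(W) = \dim(U_0)$; choosing $U_0$ of maximal dimension among subspaces with nonzero $\alpha$ that also satisfy $\dim \ge g$ does not immediately isolate $U_0$, so instead I would run a downward induction on $\dim W$ (equivalently an upward induction collecting the values $\alpha(U)$ for $\dim(U) = n, n-1, \dots, g$): knowing all $S(W',Y')=0$ and all previously-determined $\alpha$-values vanish, one solves for $\alpha(U_0)$ from a single equation $S(W,V)=0$ in which $U_0$ is the only surviving term of its dimension. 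The main obstacle is precisely this bookkeeping — setting up the induction on $\dim(W)$ (or on $n - \dim(U)$) so that at each stage the identity $S(W,V)=\sum_{U\cup W=V}\alpha(U)$ has all but one unknown already killed, thereby forcing $\alpha(U_0)=0$ and completing the induction. Everything else is the routine dimension inequality $\dim(U\cup W)\le \dim(U)+\dim(W)$ together with a direct appeal to Proposition~\ref{prop:mob_inv_gen}.
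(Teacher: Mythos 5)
Your direction (i) $\Rightarrow$ (ii) is correct and complete: it is exactly the intended one-line consequence of Proposition \ref{prop:mob_inv_gen} together with $\dim(U) \geq \dim(U \cup W) - \dim(W)$. The converse, however, has a genuine gap which you acknowledge but never close, and the specific claim you hope to establish is false for your choice of $(W,Y)$. With $Y = V$ and $W$ a complement of $U_0$, the condition $U \cup W = V$ together with $\dim(U) = \dim(U_0) =: d_0$ says precisely that $U$ is a complement of $W$, and there are $q^{d_0(n-d_0)} > 1$ such complements (when $0 < d_0 < n$). So even after a downward induction kills all terms of dimension larger than $d_0$, the identity $S(W,V) = \sum_{U:\, U \cup W = V} \alpha(U) = 0$ is a single linear equation saying only that the sum of $\alpha$ over all complements of $W$ vanishes; $U_0$ is \emph{not} ``the only surviving term of its dimension,'' and concluding $\alpha(U_0) = 0$ would require varying $W$ and showing the resulting linear system is nonsingular --- an argument you have not supplied.

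The repair is immediate, and it is the way the corollary is meant to follow from Proposition \ref{prop:mob_inv_gen}: do not insist on $Y = V$. Given $U_0$ with $\dim(U_0) \geq g$, take $W = \{0\}$ and $Y = U_0$. Then $\dim(Y) - \dim(W) = \dim(U_0) \geq g$, so (ii) applies, and $U \cup \{0\} = U_0$ forces $U = U_0$, whence $S(\{0\},U_0) = \alpha(U_0)$ by the proposition; equivalently, the $W = \{0\}$ instance of (ii) is exactly the M\"obius inversion formula of Proposition \ref{prop:1} for $\alpha(U_0)$. Thus (ii) gives $\alpha(U_0) = 0$ directly, with no induction, no maximality choice, and no bookkeeping at all.
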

   
   \begin{definition}
   Let $H = \{h_1, h_2, \ldots , h_t\}$ be a subset of $\{0,1,\ldots , n\}$ where $h_1 < h_2 < \cdots < h_t$. We say $H$ has a \emph{gap} of size $\geq g$ if either $h_1 \geq g-1$, $n-h_t \geq g-1$, or $h_{i+1} - h_i  \geq g$ for some $i \in [t-1]$. 
   \end{definition}
   
   \begin{lemma}\thlabel{lem:3}
    Let $\alpha$ and $\beta$ be functions as in \thref{prop:1}. Let $H \subseteq \{0,1,\ldots,n\}$ be a set of integers and $g$ an integer, $0 \leq g \leq n$. Moreover, we have the following conditions:
    \begin{enumerate}[(i)]
     \item $\forall U\subseteq V$, we have $\alpha(U) = 0$ whenever $dim(U) \geq g$.
     \item $\forall T \subseteq V$, we have $\beta(T) = 0$ whenever $dim(T) \notin H$.
     \item $H$ has a gap $\geq g+1$.
    \end{enumerate}
    Then, $\alpha = \beta = 0$.

   \end{lemma}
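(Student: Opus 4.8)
The plan is to argue by contradiction. Assuming $\beta \not\equiv 0$, I will pin down the set $D := \{ d \in \{0,\dots,n\} : \beta(W) \neq 0 \text{ for some } d\text{-dimensional } W \subseteq V\}$ of dimensions on which $\beta$ does not vanish identically, so tightly that $D$ cannot possibly sit inside an $H$ having a gap $\geq g+1$. (If $g = 0$ then hypothesis (i) forces $\alpha \equiv 0$, hence $\beta \equiv 0$, so we may assume $g \geq 1$.) The only tool I need is \thref{cor:1}: hypothesis (i) is equivalent to the assertion that $S(W,Y) := \sum_{W \subseteq T \subseteq Y} (-1)^{\dim Y - \dim T} q^{\binom{\dim Y - \dim T}{2}} \beta(T)$ vanishes whenever $\dim(Y) - \dim(W) \geq g$. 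Throughout, the coefficients $(-1)^d q^{\binom d 2}$ are nonzero in $\mathbb{F}_p$ since the Zsigmondy prime $p$ does not divide $q$.

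First I would show $\min D \leq g-1$: if $\min D \geq g$, pick a subspace $Y$ of dimension $\min D$ with $\beta(Y) \neq 0$; applying $S(\{0\},Y) = 0$ and noting that $\dim(T) < \min D$ for every $T \subsetneq Y$, every term but $T = Y$ drops out, leaving $S(\{0\},Y) = \beta(Y) \neq 0$ — a contradiction. Taking $Y = V$ in the dual way shows $n - \max D \leq g-1$. The crucial step is that $D$ has no large internal gap: if $d_\nu < d_{\nu+1}$ are consecutive elements of $D$ with $d_{\nu+1} - d_\nu \geq g+1$, choose $W$ with $\dim(W) = d_\nu$ and $\beta(W) \neq 0$ and choose $Y \supseteq W$ with $\dim(Y) = d_\nu + g$ (possible since $d_\nu + g \leq d_{\nu+1} - 1 \leq n-1$); then any $T$ with $W \subseteq T \subseteq Y$ and $\beta(T) \neq 0$ has $\dim(T) \in D \cap [d_\nu, d_\nu+g] = \{d_\nu\}$, forcing $T = W$, so $S(W,Y) = (-1)^g q^{\binom g 2} \beta(W) \neq 0$, contradicting $S(W,Y) = 0$. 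Hence $D$ has no gap $\geq g+1$ in the sense of the paper's definition.

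To conclude, I would confront this with $D \subseteq H$ and the hypothesis that $H$ has a gap $\geq g+1$, in three cases. A bottom gap $h_1 \geq g$ contradicts $g-1 \geq \min D \geq h_1$; a top gap $n - h_t \geq g$ contradicts $g-1 \geq n - \max D \geq n - h_t$. For a middle gap $h_{i+1} - h_i \geq g+1$: from $\min D \in H$ and $\min D \leq g-1 < h_{i+1}$ we get $\min D \leq h_i$; from $\max D \in H$, $n - \max D \leq g-1$, and $h_i \leq h_{i+1} - g - 1 \leq n - g - 1$ we get $\max D \geq h_{i+1}$; but then the two elements of $D$ that straddle the interval $(h_i, h_{i+1})$ are consecutive in $D$ and differ by at least $h_{i+1} - h_i \geq g+1$, contradicting the previous paragraph. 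Therefore $\beta \equiv 0$, and then $\alpha \equiv 0$ by the M\"obius inversion in \thref{prop:1}.

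The main obstacle I anticipate is purely the case bookkeeping in the last step — making sure that each flavour of gap in $H$ really does force a gap $\geq g+1$ in $D$ — together with the degenerate situations ($\min D = 0$, $\max D = n$, or $|D| \leq 1$) in which the $S(W,Y)$ arguments should not be invoked and the conclusion must be read off directly. It is also worth resisting the apparently natural induction on $|H|$ that splits $\beta$ at the gap into its restrictions to subspaces of dimension $\le h_i$ and of dimension $\ge h_{i+1}$: the M\"obius transform of a dimension-truncation of $\beta$ is in general not supported on dimensions $< g$, so that route needs extra work, and the direct argument above sidesteps it.
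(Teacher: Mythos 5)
Your proposal is correct. The engine of your argument is the same one the paper uses -- apply \thref{cor:1} to a pair $W \subseteq Y$ with $\dim(Y)-\dim(W)=g$ positioned across a gap, so that every intermediate term $\beta(T)$ vanishes and the alternating sum collapses to a single nonzero multiple of $\beta$ at the surviving endpoint (your remark that $(-1)^d q^{\binom{d}{2}} \neq 0$ in $\mathbb{F}_p$ because the Zsigmondy prime is coprime to $q$ is a detail the paper leaves implicit, and it is needed in the top-gap/max-$D$ direction). Where you differ is in the global bookkeeping: the paper peels $H$ iteratively -- it kills $\beta$ on the dimension adjacent to the gap, removes that dimension from $H$, observes that removal can never shrink the gap, and repeats until $H$ is empty -- whereas you run a one-shot contradiction on the support $D$ of $\beta$, proving the three structural constraints $\min D \leq g-1$, $n-\max D \leq g-1$, and no internal gap of $D$ of size $\geq g+1$, and then checking that each flavour of gap in $H \supseteq D$ violates one of them. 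Your version trades the paper's iteration (and its maintenance claim about the gap surviving removal) for the closing three-case analysis, which you carry out correctly, including the degenerate situations: the middle-gap case automatically forces $|D| \geq 2$ since $\min D \leq h_i < h_{i+1} \leq \max D$, and $g=0$ is dispatched separately. Your closing caution about the tempting induction that truncates $\beta$ at the gap is also well taken -- the M\"obius transform of a truncation need not satisfy hypothesis (i) -- and both your route and the paper's avoid that trap.
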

  
    \begin{proof}
    Let $H=\{h_1,h_2,\ldots,h_{|H|}\}$. Suppose the gap is between $h_i$ and $h_{i-1}$, or between $-1$ and $h_1$, then we may assume $\exists i$ such that $h_i \in H$ and $h_i -j \notin H$ for $ 1 \leq j \leq g$ and $h_i-g \geq 0$. Choose any two subspaces, say $U$ and $W$, of $V$ of dimensions $h_i$ and $h_i-g$, respectively. Since  $dim(U) \geq g$, $\alpha(U) = 0$. We know from \thref{cor:1} that 
    
    \begin{equation*}
      \sum\limits_{\substack{W \subseteq T \subseteq U \\ d=dim(U)-dim(T)}} (-1)^d q^{\frac{d(d-1)}{2}} \beta(T) = 0
    \end{equation*}
But whenever $dim(T)<h_i$, it lies between $h_i-g$ and $h_i-1$, and hence $\beta(T)=0$.
Then,
    \begin{equation*}
     \sum\limits_{\substack{W \subseteq T \subseteq U \\ d=dim(U)-dim(T)}} (-1)^d q^{\frac{d(d-1)}{2}} \beta(T) = \beta(U) = 0
    \end{equation*}
Since our choice of $U$ was arbitrary, we may conclude that $\beta(U)=0, \forall U \subseteq V$ with $dim(U)=h_i$. Thus, we can remove $h_i$ from the set $H$, and then use the same procedure to further reduce the size of $H$ till it is an empty set. If $H$ is empty, $\beta(U)=0,\ \forall U \subseteq V$, giving $\alpha(U) = \beta(U)=0$ as required.
    
    Now suppose the gap was between $h_{|H|}$ and $n+1$. In this case, we take $U$ of dimension $h_{|H|}$ and $W$ of dimension $h_{|H|}+g$ to show that $\beta(U)=0$, and remove $h_{|H|}$ from $H$. Note that removing a number from the set $H$ can never reduce the gap.
    \end{proof}

  \subsection{Defining functions  $f^{x,y}$ and $g^{x,y}$}\label{sec:3}
  Consider all the subspaces of the vector space $V$. We can impose an ordering on the subspaces of same dimension, and use the natural ordering across dimensions, so that every subspace can be uniquely represented by a pair of integers $\langle d,e\rangle$, indicating that it is the $e^{th}$ subspace of dimension $d$, $0 \leq d \leq n$, $1 \leq e \leq \qbin{n}{d}{q}$. Let us call that subspace $V_{d,e}$. 
   Let $S$ be the number of subspaces of $V$ of dimension at most $s$, that is, $S = \sum_{t=0}^{s} \qbin{n}{t}{q}$. Let each subspace $V_{d,e}$ of dimension at most $s$ be represented as a $0$-$1$ containment vector $v_{d,e}$ of $S$ entries, each entry of the vector denoting whether a particular subspace of dimension $\leq s$ is contained in $V_{d,e}$ or not.
      \begin{equation*}
    v_{d,e}^{x,y} = \begin{cases}
                     1 \text{, if }V_{x,y}\text{ is a subspace of }V_{d,e} \\
                     0 \text{, otherwise}
                    \end{cases}
   \end{equation*}
   The vector $v_{d,e}$ is made up of $v_{d,e}^{x,y}$ values for $0 \leq x \leq s$, $1 \leq y \leq \scalebox{0.5}{$\qbin{n}{x}{q}$}$, making it a vector of size $S$. For $0 \leq x \leq s, 1 \leq y \leq \scalebox{0.7}{ $\qbin{n}{x}{q}$ }$ we define functions $f^{x,y}: \mathbb{F}_2 ^S \rightarrow \mathbb{F}_p$ as
   \begin{equation*}
    f^{x,y}(v) = f^{x,y}(v^{0,1},v^{1,1}, \ldots , v^{1,\scalebox{0.5}{ $\qbin{n}{1}{q}$ }}, \ldots , v^{s,1}, \ldots, v^{s,\scalebox{0.5}{ $\qbin{n}{s}{q}$ }}) := v^{x,y}.
   \end{equation*}
   For $0 \leq x \leq s-r, 1 \leq y \leq \scalebox{0.7}{ $\qbin{n}{x}{q}$ }$, we define functions $g^{x,y}: \mathbb{F}_2 ^S \rightarrow \mathbb{F}_p$ as
   \begin{equation*}
    g^{x,y}(v) = f^{x,y}(v)\prod_{t \in [r]}(\sum_{j=1}^{ \scalebox{0.7}{$ \qbin{n}{1}{q}\text{ }$} }v^{1,j} - \qbin{k_t}{1}{q})
   \end{equation*}
  Let $\Omega$ denote $\mathbb{F}_2^S$. The functions $f^{x,y}$ and $g^{x,y}$ reside in the space $\mathbb{F}_p^\Omega$.

\subsection{Swallowing trick: linear independence of functions $f^{x,y}$ and $g^{x,y}$}\label{sec:4}
   \begin{lemma}\thlabel{lem:4}
   Let $s+k_r \leq n$ and $r(s-r+1) \leq b-1$. The functions $g^{x,y}$, $0 \leq x \leq s-r, 1 \leq y \leq \scalebox{0.7}{ $\qbin{n}{x}{q}$ }$, are linearly independent in the function space $\mathbb{F}_p^{\Omega}$ over $\mathbb{F}_p$.
  \end{lemma}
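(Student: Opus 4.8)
The plan is to show that no nontrivial $\mathbb{F}_p$-linear combination of the $g^{x,y}$ vanishes on all of $\Omega = \mathbb{F}_2^S$ by evaluating such a combination at the containment vectors $v_{d,e}$ coming from actual subspaces. Suppose
\[
\sum_{x=0}^{s-r}\ \sum_{y=1}^{\qbin{n}{x}{q}} c_{x,y}\, g^{x,y} \equiv 0 \quad\text{on } \Omega,
\]
with coefficients $c_{x,y} \in \mathbb{F}_p$ not all zero. The first key observation is that when we feed in $v = v_{d,e}$, the "swallowing" factor $\sum_{j} v_{d,e}^{1,j} - \qbin{k_t}{1}{q}$ equals $\qbin{d}{1}{q} - \qbin{k_t}{1}{q}$, since $v_{d,e}$ has a $1$ in exactly those coordinates indexed by the $1$-dimensional subspaces contained in $V_{d,e}$, of which there are $\qbin{d}{1}{q}$. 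Hence $\prod_{t\in[r]}(\qbin{d}{1}{q} - \qbin{k_t}{1}{q})$ is a fixed scalar $P(d)$ depending only on $d$, and it is nonzero in $\mathbb{F}_p$ precisely when $d \not\equiv k_t \pmod b$ for all $t\in[r]$: indeed $\qbin{d}{1}{q} - \qbin{k_t}{1}{q} = \frac{q^d - q^{k_t}}{q-1}$, and by the choice of $p$ via Zsigmondy, $p \mid q^d - q^{k_t}$ iff $p \mid q^{d-k_t}-1$ iff $b \mid d - k_t$. So $P(d) \neq 0$ in $\mathbb{F}_p$ whenever $d \in \{s-r+1, s-r+2, \ldots, s\}$, because each such $d$ lies strictly between consecutive... — more precisely, we need $d \not\equiv k_t \pmod b$; this is where the hypothesis $r(s-r+1)\le b-1$ together with $K\cap L=\emptyset$ and the structure of $K,L\subseteq\{0,\dots,b-1\}$ must be invoked to guarantee that the $r$ residues $k_1,\dots,k_r$ can be avoided on a suitable length-$r$ window of dimensions.

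The second step is a triangularity argument. Order the indices $(x,y)$ first by $x$ ascending. For a fixed $x_0$ in range, I would evaluate the linear combination at $v_{x_0, y_0}$ for each $y_0$. Since $f^{x,y}(v_{x_0,y_0}) = v_{x_0,y_0}^{x,y}$ is $1$ iff $V_{x,y} \subseteq V_{x_0,y_0}$, and since a subspace of dimension $x > x_0$ cannot be contained in $V_{x_0,y_0}$, only terms with $x \le x_0$ survive. If I can additionally arrange that the evaluation "sees" $V_{x_0,y_0}$ in a way that isolates $c_{x_0,y_0}$ — for instance by a downward induction on $x_0$ using that the vectors $(v_{x_0,y_0}^{x,y})_{y}$ for fixed $x=x_0$ form (a scalar multiple of) an identity block — then I conclude $c_{x_0,y_0}\,P(x_0) = (\text{combination of } c_{x,y} \text{ with } x<x_0)$. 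Running the induction from $x_0 = 0$ upward, the base case forces $c_{0,1}=0$, and at each stage $P(x_0)\ne 0$ lets us cancel it, yielding $c_{x_0,y_0}=0$ for all $y_0$, hence all coefficients vanish — contradiction.

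The role of the hypotheses $s+k_r \le n$ and $r(s-r+1)\le b-1$ is exactly to make the above go through: $s+k_r\le n$ ensures the relevant subspaces (and the auxiliary ones of dimension up to roughly $s+k_r$ needed to probe containment) actually exist inside the $n$-dimensional $V$, while $r(s-r+1)\le b-1$ is the arithmetic condition guaranteeing $P(d)\ne 0$ in $\mathbb{F}_p$ for the range of $d$ values at which we evaluate — it controls how the $r$ forbidden residues $k_t$ interact with the window of dimensions $\{x_0 - (\text{something}), \ldots, x_0\}$ so that none of the factors $\qbin{d}{1}{q}-\qbin{k_t}{1}{q}$ becomes $\equiv 0 \pmod p$. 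I expect the main obstacle to be precisely this bookkeeping: pinning down exactly which dimensions $d$ appear in the argument of $P$, and checking that under $r(s-r+1)\le b-1$ and the disjointness/size constraints on $K$ and $L$, all of them avoid the residue class of every $k_t$ modulo $b$. Once $P$ is shown nonvanishing on that set, the triangular linear-algebra step is routine.
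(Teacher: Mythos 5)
There is a genuine gap in your argument, and it is exactly at the point you flag as ``bookkeeping.'' Your plan is to evaluate the dependence $\sum c_{x,y}g^{x,y}=0$ at the containment vectors $v_{x_0,y_0}$ with $0\le x_0\le s-r$ and run a triangular induction. But at such an evaluation point every $g^{x,y}$ carries the common factor $\prod_{t\in[r]}\bigl(\qbin{x_0}{1}{q}-\qbin{k_t}{1}{q}\bigr)$, which vanishes in $\mathbb{F}_p$ as soon as $x_0\equiv k_t\pmod b$ for some $t$; in that case the evaluation reads $0=0$ and gives no information about $c_{x_0,y_0}$. The hypotheses of the lemma do \emph{not} exclude this: $K$ and $L$ disjoint, $s+k_r\le n$ and $r(s-r+1)\le b-1$ are all compatible with $k_t\le s-r$ (for example $b=10$, $r=1$, $K=\{2\}$, $L=\{0,1,3,4,5\}$, where $x_0=2$ kills the factor). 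So the window of dimensions you need, namely all of $\{0,1,\dots,s-r\}$, cannot in general be made to avoid the residues of $K$, and the induction ``from $x_0=0$ upward'' breaks at the first $x_0$ lying in $K$. What your triangular argument does prove is the weaker statement in which the indices $x\equiv k_t\pmod b$ are discarded --- that is precisely \thref{lem:53} in the paper, which is why the unrestricted case of Theorem \ref{th:1} pays the extra $\sum_{t\in[r]}\qbin{n}{k_t}{q}$ term.

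The paper's proof of \thref{lem:4} uses a different mechanism that circumvents the bad dimensions rather than avoiding them. From the coefficients it builds a function $\alpha$ on the subspace poset ($\alpha(V_{d,e})=\alpha^{d,e}$ for $d\le s-r$, zero above), lets $\beta$ be its zeta transform, and evaluates the assumed relation at \emph{every} subspace $T$ whose dimension is $\not\equiv k_t\pmod b$ for all $t$ (no restriction to $\dim T\le s-r$); there the swallowing factor is invertible, so $\beta(T)=0$ for all such $T$. Then the M\"obius-inversion gap lemma (\thref{lem:3}) is applied with $H=\{x: x\equiv k_t \pmod b \mbox{ for some } t\}$: the hypotheses $s+k_r\le n$ and $r(s-r+1)\le b-1$ are used only to show that $H$ has a gap of size at least $s-r+2$, and the gap lemma then forces $\alpha=\beta=0$, i.e.\ all coefficients vanish. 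So the role you assigned to $r(s-r+1)\le b-1$ (making the swallowing factor nonzero at the evaluation dimensions) is not how the hypothesis actually enters; without the zeta-transform/gap step your approach cannot reach the stated conclusion.
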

  
  \begin{proof}
   We wish to show that the only solution to $\sum\limits_{\substack{0 \leq x \leq s-r \\ 1\leq y \leq \scalebox{0.7}{$\qbin{n}{x}{q}$}}} \alpha^{x,y}g^{x,y} = 0$ is the trivial solution $ \alpha^{x,y}=0,~\forall x, y$.
   We define function $\alpha$ from the set of all subspaces of $V$ to $\mathbb{F}_p$ as:
   \begin{equation*}
    \alpha(V_{d,e}) = \begin{cases}
                       \alpha^{d,e},\text{ if }0 \leq d \leq s-r \\
                       0,\text{\quad if }d > s-r
                      \end{cases}
   \end{equation*}
We show that functions $\alpha$ and $\beta(U) := \sum\limits_{T\subseteq U} \alpha(T)$ satisfy the conditions of \thref{lem:3}, thereby implying $\alpha(U) = 0,\ \forall U \subseteq V$, including $\alpha(V_{d,e}) = \alpha^{d,e} = 0$ for $0 \leq d \leq s-r$, which will in turn imply that the functions $g^{x,y}$ above are linearly independent.
   
  Let $H = \{ x : 0 \leq x \leq n, x \equiv k_t \pmod b, t \in [r]\}$. We claim that $H$ has a gap of size at least $s-r+2$. Suppose $n \geq  b+k_1$. Then, since $r(s-r+1) \leq  b-1$, the gap is between $k_1$ and $b+k_1$.  Suppose $(s+k_r \leq )~ n < b+k_1$. Then, the gap is right above $k_r$. This proves the claim. We now need to show that for $T \subseteq V$, $\beta(T)=0$ whenever $dim(T) \notin H$, or whenever $dim(T) \not\equiv k_t \pmod b$,  for any $t \in [r]$. Suppose $v_T$ is the $S$-sized containment vector for $T$. When $dim(T) \not\equiv k_t \pmod b$ for any $t \in [r]$, it follows from the property of the prime $p$ given by Theorem \ref{thm:Zsigmondy} that $\sum\limits_{1 \leq j \leq \scalebox{0.5}{$\qbin{n}{1}{q}$}} v^{1,j}_T - \qbin{k_t}{1}{q} \neq0$ in $\mathbb{F}_p$, for every $t \in [r]$.
   \begin{gather*}
   \beta(T) = \sum\limits_{U\subseteq T} \alpha(U) = \sum\limits_{\substack{dim(U) \leq s-r \\ U \subseteq T}} \alpha(U) = \sum\limits_{\substack{ 0\leq d \leq s-r \\ 1\leq e \leq \scalebox{0.5}{$\qbin{n}{d}{q}$}}} \alpha(V^{d,e})f^{d,e}(v_T)
   \end{gather*}
   Since $\sum\limits_{1 \leq j \leq \scalebox{0.5}{$\qbin{n}{1}{q}$}} v^{1,j}_T - \qbin{k_t}{1}{q} \neq 0$ in $\mathbb{F}_p$ for every $t \in [r]$, $f^{d,e}(v_T) = c(T)g^{d,e}(v_T)$ where $c(T) \neq 0$. Then, 
   \begin{gather*}
    \beta(T) = c(T)\sum\limits_{\substack{ 0\leq d \leq s-r \\ 1\leq e \leq \scalebox{0.5}{$\qbin{n}{d}{q}$}}} \alpha(V^{d,e})g^{d,e}(v_T) = c(T)\sum\limits_{\substack{ 0\leq d \leq s-r \\ 1\leq e \leq \scalebox{0.5}{$\qbin{n}{d}{q}$}}} \alpha^{d,e}g^{d,e}(v_T) = c(T) \cdot 0 = 0. 
   \end{gather*}
   Since the set $H$ and the functions $\alpha$ and $\beta$ satsify the conditions of Lemma \ref{lem:3}, we have $\alpha = 0$. This proves the lemma. 
  \end{proof}

  Recall that we are given a family $\mathcal{F}= \{V_1,V_2,\ldots,V_m\}$ of subspaces of $V$ such that $\forall i \in [m]$, dim($V_i$) $ \equiv k_t \pmod b$, for some $k_t \in K$.  Further, dim($V_i \cap V_j$) $ \equiv \mu_t \pmod b$, for some $\mu_t \in L$ and $K$ and $L$ are disjoint subsets of $\{0,1, \ldots , b-1\}$.
    Let $v_i$ be the containment vector of size $S$ corresponding to subspace $V_i \in \mathcal{F}$. 
    We define the following functions from $\mathbb{F}_2^{S}\rightarrow \mathbb{F}_p$.
    \begin{gather*}
      g^i(v)=g^i(v^{0,1},v^{1,1}, \ldots , v^{1,\scalebox{0.5}{ $\qbin{n}{1}{q}$ }}, \ldots , v^{s,1}, \ldots, v^{s,\scalebox{0.5}{ $\qbin{n}{s}{q}$ }})
      \\ := \prod\limits_{j=1}^{s} \left( \sum_{1\leq y \leq \scalebox{0.6}{$\qbin{n}{1}{q}$}} \left(v_i^{1,y}v^{1,y}\right)\ - \qbin{\mu_j}{1}{q}\right)
    \end{gather*}
Let $v=v_j$. Then, $\sum_{1\leq y \leq \scalebox{0.6}{$\qbin{n}{1}{q}$}} (v_i^{1,y}v^{1,y})$ counts the number of $1$-dimensional subspaces common to $V_i$ and $V_j$. That is, $\sum\limits_{1\leq y \leq \scalebox{0.6}{$\qbin{n}{1}{q}$}} v_i^{1,y}v^{1,y} = \scalebox{0.8}{$\qbin{dim(V_i \cap V_j)}{1}{q}$}$. In $\mathbb{F}_p$, $\scalebox{0.8}{$\qbin{dim(V_i \cap V_j)}{1}{q}$} \neq \scalebox{0.8}{$\qbin{\mu_t}{1}{q}$}$ for any $1 \leq t \leq s$, if $i=j$, \text{ }\text{ } and $\scalebox{0.8}{$\qbin{dim(V_i \cap V_j)}{1}{q}$} = \scalebox{0.8}{$\qbin{\mu_t}{1}{q}$}$ for some $1 \leq t \leq s$ if $i\neq j$. Accordingly, $g^i(v_j) = 
  \begin{cases}
   0,\quad\quad i \neq j \\
   \neq 0 , \quad i=j
  \end{cases}.
$
 \begin{lemma}[Swallowing trick 1]\thlabel{lem:5}
   Let $s+k_r \leq n$ and $r(s-r+1) \leq b-1$. The collection of functions $g^i$, $1 \leq i \leq m$ together with the functions $g^{x,y}$, $0\leq x\leq s-r$, $1 \leq y \leq \qbin{n}{x}{q}$ are linearly independent in $\mathbb{F}_p^{\Omega}$ over $\mathbb{F}_p$.
  \end{lemma}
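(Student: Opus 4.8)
The plan is to run the standard polynomial-method argument: assume a linear dependence among the listed functions, evaluate it at the containment vectors $v_j$ of the family members to kill the coefficients of the $g^i$, and then invoke \thref{lem:4} to kill the remaining coefficients of the $g^{x,y}$.

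Concretely, suppose that
\begin{equation*}
\sum_{i=1}^{m}\lambda_i\,g^i \;+\; \sum_{\substack{0\le x\le s-r\\ 1\le y\le \qbin{n}{x}{q}}}\alpha^{x,y}\,g^{x,y} \;=\; 0
\end{equation*}
in $\mathbb{F}_p^{\Omega}$, with all $\lambda_i,\alpha^{x,y}\in\mathbb{F}_p$. The first step is to record the evaluation $g^{x,y}(v_j)=0$ for every $j\in[m]$ and every admissible $x,y$. Indeed $g^{x,y}(v_j)=v_j^{x,y}\prod_{t\in[r]}\bigl(\qbin{dim(V_j)}{1}{q}-\qbin{k_t}{1}{q}\bigr)$, and since $dim(V_j)\equiv k_{t_0}\pmod b$ for some $t_0\in[r]$, the factor indexed by $t_0$ vanishes in $\mathbb{F}_p$: the Zsigmondy prime $p$ makes $q$ have multiplicative order exactly $b$ modulo $p$ and keeps $q-1$ invertible mod $p$ (as $b\ge 2$), so $q^{dim(V_j)}\equiv q^{k_{t_0}}\pmod p$ forces $\qbin{dim(V_j)}{1}{q}\equiv\qbin{k_{t_0}}{1}{q}\pmod p$. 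This is the same mechanism already used in \thref{lem:4}.

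Next I would evaluate the dependence at $v=v_j$ for each fixed $j\in[m]$. By the observation above, all the $g^{x,y}(v_j)$ terms drop out, and by the evaluation identity recorded just before the statement of the lemma, $g^i(v_j)=0$ for $i\neq j$ while $g^j(v_j)\neq 0$. Hence the sum collapses to $\lambda_j\,g^j(v_j)=0$, so $\lambda_j=0$; letting $j$ range over $[m]$ gives $\lambda_1=\dots=\lambda_m=0$. What remains is $\sum_{x,y}\alpha^{x,y}g^{x,y}=0$, and since the hypotheses $s+k_r\le n$ and $r(s-r+1)\le b-1$ are exactly those of \thref{lem:4}, that lemma forces every $\alpha^{x,y}=0$. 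This proves the claimed linear independence.

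I do not expect a genuine obstacle here, since all the substance is already contained in \thref{lem:4} and in the ``triangular'' evaluation ($g^i(v_j)$ vanishing for $i\neq j$ and nonzero for $i=j$) established before the statement. The one point that needs a little care is verifying $g^{x,y}(v_j)=0$, i.e.\ that the swallowing factor $\prod_{t\in[r]}\bigl(\qbin{dim(V_j)}{1}{q}-\qbin{k_t}{1}{q}\bigr)$ is annihilated in $\mathbb{F}_p$ by the congruence $dim(V_j)\equiv k_t\pmod b$ — exactly the place where Zsigmondy's Theorem enters.
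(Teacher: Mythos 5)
Your proposal is correct and follows essentially the same route as the paper: evaluate the hypothetical dependence at each containment vector $v_j$, use that $g^{x,y}(v_j)=0$ (since $dim(V_j)\equiv k_t \pmod b$ forces $\qbin{dim(V_j)}{1}{q}\equiv\qbin{k_t}{1}{q}$ in $\mathbb{F}_p$ via the Zsigmondy prime) and that $g^i(v_j)$ is zero for $i\neq j$ and nonzero for $i=j$, to kill all the coefficients $\lambda_j$, and then invoke \thref{lem:4} to kill the $\alpha^{x,y}$. Your justification of the vanishing of the swallowing factor (including the invertibility of $q-1$ modulo $p$ since $b\geq 2$) is a slightly more explicit version of the paper's one-line remark, but the argument is the same.
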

 \begin{proof}
  Let
  \begin{equation}
    \sum\limits_{1 \leq i \leq m}\alpha^ig^i + \sum\limits_{\substack{0 \leq x \leq s-r \\ 1\leq y \leq \scalebox{0.7}{$\qbin{n}{x}{q}$}}} \alpha^{x,y}g^{x,y} = 0\label{eq:1}
   \end{equation}
 We know that $g^i(v_j)=0$ whenever $i \neq j$, and $g^{x,y}(v_i)=0, 1 \leq i \leq m$. The latter holds because $dim(V_i) \equiv k_t \pmod b$, say equal  to $bl+k_t$, for some $t \in [r]$. Number of $1$-dimensional subspaces in $V_i$ will then be $\qbin{bl+k_t}{1}{q}$ which is equal to $\qbin{k_t}{1}{q}$ in $\mathbb{F}_p$. Suppose we 
   evaluate L.H.S. of Equation \eqref{eq:1} on $v_1$, then
   all terms except the first one vanish. This gives us $\alpha^1 = 0$, and reduces the relation by one term from left. Next, we put $v=v_2$ to get $\alpha^2=0$, and so on. Finally, all $\alpha^i$ terms are zero, and we are left only with functions $g^{x,y}$. These $\alpha^{x,y}$ values are zero from \thref{lem:4}. Therefore, we have shown that \eqref{eq:1} implies that $\alpha^i=0, 1\leq i\leq m$ and $\alpha^{x,y}=0, 0\leq x \leq s-r, 1 \leq y \leq \qbin{n}{x}{q}$, and hence the given functions are linearly independent.
  \end{proof}
\subsection{Proof of Theorem \ref{th:1}: $\left(s+k_r \leq n \mbox { and } r(s-r+1) \leq b-1\right)$}
  \begin{lemma}
  \label{lem_span}
   The collection of functions $f^{x,y}$, $0 \leq x \leq s,\ 1 \leq y \leq \scalebox{0.6}{$\qbin{n}{x}{q}$}$, spans all the functions $g^{x,y}$, $0 \leq x \leq s-r,\ 1 \leq y \leq \scalebox{0.6}{$\qbin{n}{x}{q}$}$ as well as the functions $g^i$, $1 \leq i \leq m$.
  \end{lemma}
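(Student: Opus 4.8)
The plan is to expand each of $g^i$ and $g^{x,y}$ as a polynomial over $\mathbb{F}_p$ in the coordinate variables and then rewrite, one monomial at a time, every term as one of the coordinate functions $f^{x',y'}$ with $0\le x'\le s$. Since in the rest of the argument these functions are only ever evaluated at $0$-$1$ containment vectors, it is enough to verify the spanning relations pointwise, at the containment vector $v_T$ of each subspace $T\subseteq V$; this is where the reduction is transparent.

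First I would record the elementary reduction rule. Fix a subspace $T$ with containment vector $v_T$. Since every entry of $v_T$ lies in $\{0,1\}$, we have $(v_T^{x',y'})^2=v_T^{x',y'}$ in $\mathbb{F}_p$, so any product of coordinates of $v_T$ may be taken squarefree. For one-dimensional subspaces $V_{1,y_1},\dots,V_{1,y_\ell}$, the product $v_T^{1,y_1}\cdots v_T^{1,y_\ell}$ equals $1$ exactly when every $V_{1,y_a}\subseteq T$, i.e. exactly when $V_{1,y_1}+\cdots+V_{1,y_\ell}\subseteq T$; hence it equals $f^{x',y'}(v_T)$, where $V_{x',y'}:=V_{1,y_1}+\cdots+V_{1,y_\ell}$ has dimension $x'\le\ell$. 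The same reasoning gives $v_T^{x,y}\,v_T^{1,y_1}\cdots v_T^{1,y_\ell}=f^{x'',y''}(v_T)$ with $V_{x'',y''}=V_{x,y}+V_{1,y_1}+\cdots+V_{1,y_\ell}$ of dimension at most $x+\ell$. Finally $f^{0,1}(v_T)=v_T^{0,1}=1$, so constant terms are scalar multiples of $f^{0,1}$.

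Next I would apply this to the two families. Writing $g^i(v)=\prod_{j=1}^{s}\big(\ell_i(v)-\qbin{\mu_j}{1}{q}\big)$ with $\ell_i(v)=\sum_{y:\,V_{1,y}\subseteq V_i}v^{1,y}$, expanding this product of $s$ affine forms and reducing powers via $(v^{1,y})^2=v^{1,y}$ shows that $g^i$ is an $\mathbb{F}_p$-linear combination of squarefree monomials $\prod_{y\in Y}v^{1,y}$ with $|Y|\le s$, together with a constant term. At any $v_T$, each such monomial equals $f^{x',y'}(v_T)$ with $x'=\dim\!\big(\sum_{y\in Y}V_{1,y}\big)\le|Y|\le s$, and the constant is a multiple of $f^{0,1}(v_T)$; the coefficients do not depend on $T$, so $g^i$ lies in the span of the $f^{x,y}$. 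For $g^{x,y}$ with $0\le x\le s-r$, write $g^{x,y}(v)=v^{x,y}\prod_{t\in[r]}\big(\sum_j v^{1,j}-\qbin{k_t}{1}{q}\big)$; expanding the $r$-fold product and multilinearizing gives a linear combination of monomials $v^{x,y}\prod_{j\in J}v^{1,j}$ with $|J|\le r$, and at $v_T$ each of these equals $f^{x'',y''}(v_T)$ with $\dim(V_{x'',y''})\le x+|J|\le(s-r)+r=s$. Hence $g^{x,y}$ is likewise in the span of the $f^{x,y}$.

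The one point that needs care is the dimension bookkeeping: one must use that $g^i$ is a product of exactly $s$ affine forms in indicators of one-dimensional subspaces, so that the surviving subspace in every surviving monomial has dimension $\le s$; and that in $g^{x,y}$ the base factor corresponds to a subspace of dimension $x\le s-r$ while the $r$-fold product can contribute at most $r$ further one-dimensional subspaces, keeping the total dimension $\le s$. Everything else is routine expansion of products of affine forms, together with the idempotency $(v^{x',y'})^2=v^{x',y'}$ valid on $0$-$1$ vectors.
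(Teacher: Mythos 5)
Your proposal is correct and is essentially the paper's own proof: expand the products of affine forms, reduce exponents using the $0$-$1$ idempotency of the coordinates, and replace each squarefree monomial in the indicators of $1$-dimensional subspaces (times possibly one factor $f^{x,y}$) by the single function $f^{x',y'}$ indexed by the spanned subspace, with the same dimension bookkeeping ($\le s$ for the $g^i$, and $\le (s-r)+r=s$ for the $g^{x,y}$). Your explicit restriction to containment vectors $v_T$ is exactly where the paper's key identity $f^{x,y}f^{1,z}=f^{x',w}$ (and the claim that $f^{0,1}$ is the constant $1$) is valid anyway, and it suffices because every later evaluation in the linear-independence and counting arguments is made at containment vectors.
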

  
  \begin{proof}
  Proof in Appendix  \ref{append_proof_of_lem_span}
  \end{proof}
  
  This means that the above functions $g^{x,y}$ and $g^{i}$ belong to the span of functions $f^{x,y}$ which is a function space of dimension at most $S$. From \thref{lem:5}, we know that $g^{x,y}$ and $g^i$ are together linearly independent. Thus, 
  
  \begin{gather*}
   \sum\limits_{j=0}^{s-r} \qbin{n}{j}{q} + m \leq S = \sum\limits_{j=0}^{s} \qbin{n}{j}{q}. \\
   \Rightarrow |\mathcal{F}| = m \leq \qbin{n}{s}{q} + \qbin{n}{s-1}{q} + \cdots + \qbin{n}{s-r+1}{q}.
  \end{gather*}
We thus have the following theorem. 
\begin{theorem}\thlabel{thm:modular_thm_1}
   Let $V$ be a vector space of dimension $n$ over a finite field of size $q$. Let $K = \{k_1, \ldots , k_r\},L = \{\mu_1, \ldots , \mu_s\}$ be two disjoint subsets of $\{0,1, \ldots , b-1\}$ with $k_1 < \cdots < k_r$.  Assume $s+k_r \leq n$ and $r(s-r+1) \leq  b-1$. 
   Let 
   $\mathcal{F} = \{V_1,V_2,\ldots,V_m\}$ be a family of subspaces of $V$ such that $\forall i \in [m]$, dim($V_i$) $\equiv k_t \pmod b$, for some $k_t \in K$; for every distinct $i,j \in [m]$, dim($V_i \cap V_j$) $ \equiv \mu_t \pmod b$, for some $\mu_t \in L$. Moreover, it is given that neither of the following two conditions hold:
   \begin{enumerate}[(i)]
   \item $q+1$ is a power of 2, and $b=2$
   \item $q=2, b=6$
   \end{enumerate}
 Then, $|\mathcal{F}| \leq N(n,s,r,q)$.    		

  \end{theorem}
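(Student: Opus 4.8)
The plan is to deduce \thref{thm:modular_thm_1} from the polynomial/linear-algebra machinery already set up in Sections \ref{sec:3}--\ref{sec:4}, exactly paralleling the subset proof of Alon--Babai--Suzuki. The backbone is \thref{lem:5} (Swallowing trick~1), which under the hypotheses $s+k_r\le n$ and $r(s-r+1)\le b-1$ asserts that the $m$ functions $g^i$ together with the $\sum_{x=0}^{s-r}\qbin{n}{x}{q}$ functions $g^{x,y}$ are linearly independent in $\mathbb{F}_p^\Omega$. So the whole argument reduces to exhibiting a spanning set of small size that contains all these functions, and then counting dimensions.

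First I would invoke \thref{lem_span}: every $g^{x,y}$ (for $0\le x\le s-r$) and every $g^i$ (for $1\le i\le m$) lies in the span of the $S=\sum_{t=0}^{s}\qbin{n}{t}{q}$ functions $f^{x,y}$, $0\le x\le s$, $1\le y\le\qbin{n}{x}{q}$. Hence the linearly independent family of \thref{lem:5} sits inside a subspace of $\mathbb{F}_p^\Omega$ of dimension at most $S$. Comparing cardinalities gives
\begin{equation*}
\sum_{j=0}^{s-r}\qbin{n}{j}{q}+m\ \le\ S\ =\ \sum_{j=0}^{s}\qbin{n}{j}{q},
\end{equation*}
and cancelling the common terms $\qbin{n}{0}{q},\dots,\qbin{n}{s-r}{q}$ from both sides leaves
\begin{equation*}
m\ \le\ \qbin{n}{s}{q}+\qbin{n}{s-1}{q}+\cdots+\qbin{n}{s-r+1}{q}\ =\ N(n,s,r,q),
\end{equation*}
which is the claimed bound. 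The exclusion of the two Zsigmondy-exceptional cases is needed precisely so that the prime $p$ with $q^i\not\equiv1\pmod p$ for $0<i<b$ and $q^b\equiv1\pmod p$ exists; this is what makes $\qbin{\cdot}{1}{q}$ separate residue classes mod $b$ and is already baked into \thref{lem:4}, \thref{lem:5}, and \thref{lem_span}.

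Essentially all of the work has therefore been front-loaded into the lemmas, so the proof of \thref{thm:modular_thm_1} itself is just the assembly step above. The only thing I would be careful about is the bookkeeping: checking that the index ranges for the $g^{x,y}$ (namely $0\le x\le s-r$) and for the $f^{x,y}$ (namely $0\le x\le s$) are exactly as stated so that the telescoping cancellation is valid, and that $s-r\ge 0$ (if $s<r$ the $g^{x,y}$ list is empty and the bound reads $m\le\sum_{j=0}^{s}\qbin{n}{j}{q}$, still consistent with $N(n,s,r,q)$ once we use the convention $\qbin{a}{b}{q}=0$ for $b<0$). If there is any genuine obstacle, it is entirely contained in \thref{lem:4}/\thref{lem:5}/\thref{lem_span} — in particular the ``swallowing'' step that the product factor $\prod_{t\in[r]}(\sum_j v^{1,j}-\qbin{k_t}{1}{q})$ is a nonzero scalar on each relevant containment vector — but those are assumed here, so for this theorem there is no remaining difficulty.
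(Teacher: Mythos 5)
Your proposal is correct and follows essentially the same route as the paper: combine Lemma \ref{lem_span} (all $g^{x,y}$ and $g^i$ lie in the span of the $S$ functions $f^{x,y}$) with Lemma \ref{lem:5} (their joint linear independence under $s+k_r\le n$ and $r(s-r+1)\le b-1$), then compare dimensions and cancel $\sum_{j=0}^{s-r}\qbin{n}{j}{q}$ to get $m\le N(n,s,r,q)$. The assembly step, including the role of the Zsigmondy prime excluded cases, matches the paper's argument, so nothing further is needed.
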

\subsection{Proof of Theorem \ref{th:1}: unrestricted}
We start with the following lemma  which claims that the functions $g^{x,y}, 0\leq x \leq s-r$, $1\leq y \leq \qbin{n}{x}{q}$, and $\forall t \in [r], x \not\equiv k_t \pmod b$ are linearly independent over $\mathbb{F}_p$. 
\begin{lemma}\thlabel{lem:53}
The collection of functions 
				$\{g^{x,y}\ |\ 0\leq x \leq s-r, 1\leq y \leq \qbin{n}{x}{q},~\mbox{and }\forall t \in [r], x \not\equiv k_t \pmod b \} $
			are linearly independent in the function space $\mathbb{F}_p^{\Omega}$ over $\mathbb{F}_p$.
	\end{lemma}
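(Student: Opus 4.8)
The plan is to mimic the proof of \thref{lem:4}, replacing the set $H$ and the gap argument by a version that only needs the dimensions used to be the ``right'' residues. As in \thref{lem:4}, suppose $\sum \alpha^{x,y} g^{x,y} = 0$ where the sum is over the restricted index set $\{(x,y) : 0 \le x \le s-r,\ 1 \le y \le \qbin{n}{x}{q},\ x \not\equiv k_t \pmod b\ \forall t \in [r]\}$, and define $\alpha(V_{d,e}) = \alpha^{d,e}$ if $(d,e)$ is in the index set and $\alpha(V_{d,e}) = 0$ otherwise (in particular $\alpha(U) = 0$ whenever $\dim(U) > s-r$, and also whenever $\dim(U) \le s-r$ but $\dim(U) \equiv k_t \pmod b$ for some $t$). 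Let $\beta(U) = \sum_{T \subseteq U}\alpha(T)$ be its zeta transform.

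First I would show that $\beta(T) = 0$ whenever $\dim(T) \not\equiv k_t \pmod b$ for every $t \in [r]$, exactly as in \thref{lem:4}: when $\dim(T) \not\equiv k_t \pmod b$, Zsigmondy's prime $p$ guarantees $\sum_j v_T^{1,j} - \qbin{k_t}{1}{q} \ne 0$ in $\mathbb{F}_p$ for every $t$, so the product $c(T) = \prod_{t\in[r]}(\sum_j v_T^{1,j} - \qbin{k_t}{1}{q})$ is a nonzero scalar, and $\beta(T) = \sum_{U \subseteq T} \alpha(U) = \sum_{\dim U \le s-r,\, U \subseteq T} \alpha(U) f^{d,e}(v_T) = c(T) \sum \alpha^{d,e} g^{d,e}(v_T) = c(T)\cdot 0 = 0$. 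So $\beta$ is supported on dimensions in $H := \{x : 0 \le x \le n,\ x \equiv k_t \pmod b \text{ for some } t\in[r]\}$.

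Next I would run the Möbius/\thref{cor:1} argument to push this further: I want to conclude $\beta(U) = 0$ for all $U$, which then forces $\alpha = 0$ by Möbius inversion, giving the linear independence. The point is that $\alpha(U) = 0$ whenever $\dim(U) \ge s-r+1$ \emph{and also} whenever $\dim(U) \le s-r$ but $\dim(U) \in H$. So for a subspace $U$ with $\dim(U) = h_i \in H$, pick $W \subseteq U$ with $\dim(W) = h_i - g$ where $g$ is chosen so that the interval $(h_i - g, h_i]$ of dimensions below $h_i$ contains no element of $H$ — this is the analogue of the gap, and since consecutive elements of $H$ differ by at least $b - (k_r - k_1) \ge \dots$, and more importantly the relevant bound $r(s-r+1) \le b-1$ forces a gap of size $\ge s-r+2 > g$ when $g$ is taken appropriately; then Corollary~\ref{cor:1} with $\alpha$ vanishing above dimension $s-r$ and $\beta$ vanishing strictly between $h_i - g$ and $h_i$ collapses the alternating sum to $\beta(U) = 0$. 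Iterating over $H$ from the bottom (and handling the top gap as in \thref{lem:3}) kills all of $\beta$, hence $\alpha \equiv 0$.

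The main obstacle I anticipate is getting the gap bookkeeping exactly right: here $\alpha$ is \emph{not} supported on all dimensions $\le s-r$ but only on those $\le s-r$ that avoid $H$, and $\beta$ is supported on $H$; one must verify that $H$ together with the extra ``holes'' created by $\alpha$ vanishing on $H \cap \{0,\dots,s-r\}$ still leaves a gap of size $> g$ for the chosen $g$, under the hypotheses of the case being treated (which here, since the lemma statement imposes no $s+k_r \le n$ or $r(s-r+1)\le b-1$ hypothesis, suggests that $g$ should be taken smaller — likely $g = s-r+1$ is replaced by something that works unconditionally, or the lemma is really only invoked later under additional constraints). I would pin down the exact value of $g$ from how this lemma is used downstream and then check the gap inequality; modulo that, the argument is a routine adaptation of \thref{lem:4} and \thref{lem:3}.
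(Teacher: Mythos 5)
Your step showing that the zeta transform $\beta$ vanishes on all dimensions outside $H=\{x : x\equiv k_t \pmod b \mbox{ for some } t\in[r]\}$ is fine, but the plan cannot be completed from there, and the unresolved point you flag at the end is precisely where it breaks. Since $\alpha$ is (possibly) nonzero at dimensions up to $s-r$ that avoid $H$, \thref{cor:1} forces you to take $g=s-r+1$ in \thref{lem:3}; the extra vanishing of $\alpha$ on the low dimensions that lie in $H$ does not let you shrink $g$, because condition (i) of \thref{lem:3} concerns \emph{all} dimensions $\geq g$. So you need $H$ to have a gap of size at least $s-r+2$, and that gap is exactly what the hypotheses $r(s-r+1)\leq b-1$ and $s+k_r\leq n$ purchase in \thref{lem:4}. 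But \thref{lem:53} has no such hypotheses by design: it is the lemma used for the \emph{unrestricted} case of \thref{th:1}, so your guess that it is "only invoked later under additional constraints" is the opposite of its role. Concretely, take $b=6$, $K=\{0,3\}$, $L=\{1,2,4,5\}$, so $r=2$, $s=4$, $r(s-r+1)=6>b-1$; then $H=\{0,3,6,\ldots\}$ has consecutive gaps of size $3$, while your argument would need a gap of size $s-r+2=4$. The adaptation stalls exactly in the situation the lemma is meant to cover, and I see no way to repair it within the M\"obius/gap framework.

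The paper's proof is entirely different and much simpler: it is a triangularity argument that needs no M\"obius inversion. Assume a nontrivial dependence $\sum \alpha^{x,y}g^{x,y}=0$ over the restricted index set, let $\langle x_0,y_0\rangle$ be the first index, in the ordering of Section 3 (increasing dimension), with $\alpha^{x_0,y_0}\neq 0$, and evaluate at the containment vector $v_{x_0,y_0}$. Every $g^{x,y}$ with $\langle x,y\rangle$ later in the ordering vanishes there, since $V_{x,y}\not\subseteq V_{x_0,y_0}$ forces $f^{x,y}(v_{x_0,y_0})=0$; every earlier coefficient is zero by minimality; and the surviving diagonal value $g^{x_0,y_0}(v_{x_0,y_0})=\prod_{t\in[r]}\bigl(\qbin{x_0}{1}{q}-\qbin{k_t}{1}{q}\bigr)$ is nonzero in $\mathbb{F}_p$ precisely because $x_0\not\equiv k_t\pmod b$ for all $t$ and $p$ is the Zsigmondy prime. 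This is exactly what the restriction on $x$ in the statement is for: it makes the diagonal entries invertible, so $\alpha^{x_0,y_0}=0$, a contradiction. In short, the restriction replaces the gap hypothesis, and the direct evaluation argument is what makes the unconditional statement possible.
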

\begin{proof}
	Recall that $g^{x,y}(v) = f^{x,y}(v) \prod_{t \in [r]}(\sum\limits_{j=1}^{ \scalebox{0.7}{$ \qbin{n}{1}{q}\text{ }$} } v^{1,j} - \qbin{k_t}{1}{q})$. 
Assume, for the sake of contradiction, $\sum\limits_{\substack{0\leq x\leq s-r \\ x \not\equiv k_t \pmod p, \forall t \in [r]}} \alpha^{x,y} g^{x,y} = 0 $ with at least one $\alpha^{x,y}$ as non-zero. Let $\langle x_0,y_0\rangle$ be the first subspace, based on the ordering of subspaces defined in Section \ref{sec:3}, such that  $\alpha^{x_0,y_0}$ is  non-zero. Evaluating both sides on $v_{x_0,y_0}$, we see that all $f^{x,y}$ (and therefore $g^{x,y}$) with $\langle x,y\rangle$ higher in the ordering than $\langle x_0,y_0\rangle$ will vanish (due to the virtue of our ordering), and so we get $\alpha^{x_0,y_0}=0$ which is a contradiction. Here we have crucially used the fact that by ignoring $x \equiv k_t \pmod p$ cases, for any $t \in [r]$, we make sure that $v_{x_0,y_0}$ used above always has $x_0 \not\equiv k_t \pmod b$ and therefore $(\sum\limits_{j=1}^{ \scalebox{0.7}{$ \qbin{n}{1}{q}\text{ }$} } v_{x_0,y_0}^{1,j} - \qbin{k_t}{1}{q}) \not\equiv 0 \pmod p,~\forall t \in [r]$.
	\end{proof}
\begin{lemma}[Swallowing trick 2]\thlabel{lem:swallow_2}
   The collection of functions $g^i$, $1 \leq i \leq m$ together with the functions $g^{x,y}$, $0\leq x \leq s-r, x \not\equiv k_t \pmod b, \forall t \in [r], 1\leq y \leq \qbin{n}{x}{q}$ are linearly independent in $\mathbb{F}_p^{\Omega}$ over $\mathbb{F}_p$.
  \end{lemma}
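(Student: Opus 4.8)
The plan is to mimic the argument of \thref{lem:5}, replacing the appeal to \thref{lem:4} with the one to \thref{lem:53}. Suppose we have a dependence relation
\begin{equation*}
\sum_{1 \leq i \leq m} \alpha^i g^i + \sum_{\substack{0 \leq x \leq s-r \\ x \not\equiv k_t \!\!\pmod b,\ \forall t \in [r] \\ 1 \leq y \leq \scalebox{0.7}{$\qbin{n}{x}{q}$}}} \alpha^{x,y} g^{x,y} = 0.
\end{equation*}
As established before \thref{lem:5}, for every $i,j \in [m]$ we have $g^i(v_j) = 0$ when $i \neq j$ and $g^i(v_i) \neq 0$. We also need that $g^{x,y}(v_i) = 0$ for all $i \in [m]$: this is exactly the computation from the proof of \thref{lem:5}, namely $\dim(V_i) \equiv k_t \pmod b$ for some $t \in [r]$, so the number of $1$-dimensional subspaces of $V_i$ equals $\qbin{k_t}{1}{q}$ in $\mathbb{F}_p$ (using the Zsigmondy prime), which kills the factor $\bigl(\sum_j v_i^{1,j} - \qbin{k_t}{1}{q}\bigr)$ in $g^{x,y}(v_i)$.

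First I would evaluate the relation successively on $v_1, v_2, \ldots, v_m$. Evaluating on $v_1$ annihilates every $g^i$ with $i \neq 1$ and every $g^{x,y}$, leaving $\alpha^1 g^1(v_1) = 0$, hence $\alpha^1 = 0$; repeating on $v_2, \ldots, v_m$ forces all $\alpha^i = 0$. What remains is $\sum \alpha^{x,y} g^{x,y} = 0$ ranging over the index set $0 \leq x \leq s-r$, $x \not\equiv k_t \pmod b$ for all $t \in [r]$, which by \thref{lem:53} forces every $\alpha^{x,y} = 0$. Hence the combined collection is linearly independent.

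The only subtlety worth flagging is bookkeeping: one should note that the index set here (the $g^{x,y}$ with $x \not\equiv k_t \pmod b$) is a subset of the one in \thref{lem:5}, so no new case analysis is needed, and the restriction $s + k_r \leq n$, $r(s-r+1) \leq b-1$ assumed in \thref{lem:5} is \emph{not} needed here because \thref{lem:53} has no such hypothesis. There is no real obstacle; the ``swallowing'' evaluation step is the heart of it, and it is essentially identical to the one already carried out.
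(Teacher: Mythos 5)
Your proof is correct and matches the paper's intent exactly: the paper only remarks that the proof is ``similar to that of Lemma \ref{lem:5}'', and your argument — swallow the $\alpha^i$ by evaluating on $v_1,\ldots,v_m$ (using $g^{x,y}(v_i)=0$ since $\dim(V_i)\equiv k_t \pmod b$), then invoke Lemma \ref{lem:53} in place of Lemma \ref{lem:4} for the surviving $\alpha^{x,y}$ — is precisely that argument, including the correct observation that no hypothesis $s+k_r\leq n$, $r(s-r+1)\leq b-1$ is needed.
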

  \begin{proof}
  Proof is similar to the proof of Lemma \ref{lem:5}. 
  \end{proof}
  Since $s <  b$, for any $0 \leq x \leq s-r$ and for any $t \in [r]$, $x \not \equiv k_t \pmod b$ is equivalent to $x \neq k_t$. Combining Lemmas \ref{lem:53}, \ref{lem:swallow_2} and \ref{lem_span}, we have 
  \begin{equation*}
   \sum\limits_{\substack{0 \leq j \leq s-r, \\ j \neq k_t, t \in [r]}} \qbin{n}{j}{q} + m \leq \sum\limits_{j=0}^{s} \qbin{n}{j}{q}.
  \end{equation*}
  This implies, 
  \begin{equation*}
    |\mathcal{F}| = m \leq = \begin{cases}
                       N(n,s,r,q),\text{ if }s < k_1 + r \\
                       N(n,s,r,q) + \sum\limits_{t \in [r]}\qbin{n}{k_t}{q},\text{ otherwise}
                      \end{cases}
   \end{equation*}
We thus have the following theorem which combined with Theorem \ref{thm:modular_thm_1} yields Theorem \ref{th:1}. 
\begin{theorem}
 Let $V$ be a vector space of dimension $n$ over a finite field of size $q$. Let $K = \{k_1, \ldots , k_r\},L = \{\mu_1, \ldots , \mu_s\}$ be two disjoint subsets of $\{0,1, \ldots , b-1\}$ with $k_1 < \cdots < k_r$.  
  Let 
   $\mathcal{F} = \{V_1,V_2,\ldots,V_m\}$ be a family of subspaces of $V$ such that $\forall i \in [m]$, dim($V_i$) $\equiv k_t \pmod b$, for some $k_t \in K$; for every distinct $i,j \in [m]$, dim($V_i \cap V_j$) $ \equiv \mu_t \pmod b$, for some $\mu_t \in L$. Moreover, it is given that neither of the following two conditions hold:
   \begin{enumerate}[(i)]
   \item $q+1$ is a power of 2, and $b=2$
   \item $q=2, b=6$
   \end{enumerate}
 Then, 
 \begin{equation*}
    		|\mathcal{F}| \leq \begin{cases}
    											\scalebox{0.7}{ $N(n,s,r,q)$}, \hskip 1cm \mbox{ if } (s < k_1 + r)  \\
    											\scalebox{0.7}{ $N(n,s,r,q) + \sum\limits_{t \in [r]}\qbin{n}{k_t}{q}$}, \hskip 1cm \text{otherwise.}
											\end{cases}    		
    		\end{equation*}
 \end{theorem}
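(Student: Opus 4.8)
The plan is to follow the blueprint already laid down for the restricted case (\thref{thm:modular_thm_1}) but to replace the single ``swallowing'' polynomial factor $\prod_{t\in[r]}(\sum_j v^{1,j}-\qbin{k_t}{1}{q})$ with a more economical choice of auxiliary functions, so that we only have to throw away dimensions that genuinely lie in the forbidden set $K$. Concretely, I would carry out three linear-algebra steps, all living in the function space $\mathbb{F}_p^{\Omega}$ where $\Omega=\mathbb{F}_2^S$ and $S=\sum_{t=0}^s\qbin{n}{t}{q}$.

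First, establish \thref{lem:53}: the functions $g^{x,y}$ with $0\le x\le s-r$, $1\le y\le\qbin{n}{x}{q}$, and $x\not\equiv k_t\pmod b$ for every $t\in[r]$, are linearly independent. The argument is a triangularity / ``first nonzero coefficient'' argument: order the relevant subspaces as in Section~\ref{sec:3}, suppose $\sum\alpha^{x,y}g^{x,y}=0$ with $\langle x_0,y_0\rangle$ the first index carrying a nonzero coefficient, and evaluate at the containment vector $v_{x_0,y_0}$. All $g^{x,y}$ with $\langle x,y\rangle$ strictly later vanish because $f^{x,y}(v_{x_0,y_0})=0$ there, and the factor $\prod_{t\in[r]}(\sum_j v_{x_0,y_0}^{1,j}-\qbin{k_t}{1}{q})=\prod_{t}(\qbin{x_0}{1}{q}-\qbin{k_t}{1}{q})$ is nonzero in $\mathbb{F}_p$ precisely because $x_0\not\equiv k_t\pmod b$ for all $t$ (using the Zsigmondy prime, so that $\qbin{a}{1}{q}\equiv\qbin{a'}{1}{q}\pmod p$ iff $a\equiv a'\pmod b$). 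This forces $\alpha^{x_0,y_0}=0$, a contradiction.

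Second, prove \thref{lem:swallow_2}: the $g^i$, $1\le i\le m$, together with these restricted $g^{x,y}$, are linearly independent. This is the ``swallowing trick'' exactly as in the proof of \thref{lem:5}: in a dependence $\sum_i\alpha^i g^i+\sum_{x,y}\alpha^{x,y}g^{x,y}=0$, evaluating at $v_j$ kills every $g^i$ with $i\ne j$ (since $g^i(v_j)=0$) and kills every $g^{x,y}$ (since $\dim V_j\equiv k_t\pmod b$ makes $\sum_\ell v_j^{1,\ell}\equiv\qbin{k_t}{1}{q}\pmod p$), leaving $\alpha^j g^j(v_j)=0$ with $g^j(v_j)\ne 0$, hence $\alpha^j=0$; then the remaining relation among the $g^{x,y}$ is trivial by \thref{lem:53}. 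Third, invoke \thref{lem_span}: all the $g^{x,y}$ and all the $g^i$ lie in the span of the $f^{x,y}$, $0\le x\le s$, which has dimension at most $S=\sum_{j=0}^s\qbin{n}{j}{q}$. Counting dimensions gives
\begin{equation*}
\sum_{\substack{0\le j\le s-r\\ j\ne k_t,\,t\in[r]}}\qbin{n}{j}{q}\;+\;m\;\le\;\sum_{j=0}^{s}\qbin{n}{j}{q}.
\end{equation*}
Since $s<b$, for $0\le x\le s-r$ the condition $x\not\equiv k_t\pmod b$ is just $x\ne k_t$; rearranging and noting that $\sum_{j=0}^{s-r}\qbin{n}{j}{q}$ cancels against part of the right side leaves $m\le N(n,s,r,q)$ plus a correction term $\sum_{t\in[r]}\qbin{n}{k_t}{q}$ coming back exactly for those $k_t$ with $k_t\le s-r$, i.e.\ when $s\ge k_1+r$. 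When $s<k_1+r$ no $k_t$ is in the range $[0,s-r]$, so no correction is needed and $m\le N(n,s,r,q)$, completing the case analysis.

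The only genuinely delicate point is the one flagged in \thref{lem:53}: one must be careful that the ``first nonzero index'' evaluation really does make every relevant swallowing factor nonzero, which is exactly why the indices $x\equiv k_t\pmod b$ have to be excluded from the family of $g^{x,y}$ in the first place — including them would leave the corresponding $\alpha^{x,y}$ unconstrained, and this is precisely the source of the extra $\sum_t\qbin{n}{k_t}{q}$ term in the unrestricted bound. Everything else is a mechanical repetition of the machinery from the restricted case (Lemmas~\ref{lem:4}, \ref{lem:5}, and \ref{lem_span}), so I do not expect any further obstacle; the Möbius-inversion Lemma~\ref{lem:3} is not even needed here since the linear independence of the restricted $g^{x,y}$ follows directly from triangularity rather than from a gap argument.
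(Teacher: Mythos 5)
Your proposal follows essentially the same route as the paper's own proof: Lemma~\ref{lem:53} via the first-nonzero-coefficient/triangularity evaluation (valid because excluding $x\equiv k_t\pmod b$ keeps every swallowing factor nonzero at the Zsigmondy prime), then Swallowing trick~2 (Lemma~\ref{lem:swallow_2}) exactly as in Lemma~\ref{lem:5}, then Lemma~\ref{lem_span} and the dimension count $\sum_{0\le j\le s-r,\, j\ne k_t}\qbin{n}{j}{q}+m\le\sum_{j=0}^{s}\qbin{n}{j}{q}$ with the same $s<k_1+r$ case split. The argument is correct and matches the paper's proof of the unrestricted case.
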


\begin{center}\section*{PART B: Fractional L-intersecting families of subspaces}\end{center}
 Let $L=\{\frac{a_1}{b_1}, \ldots , \frac{a_s}{b_s}\}$ be a collection of positive irreducible fractions, each strictly less than $1$. Let $V$ be a vector space of dimension $n$ over a finite field of size $q$. Let $\mathcal{F}$ be a family of subspaces of $V$. Recall that, we call $\mathcal{F}$ a  \emph{fractional $L$-intersecting family of subspaces} if $\forall A, B \in \mathcal{F}$, $dim(A \cap B) \in \{\frac{a_i}{b_i}dim(A), \frac{a_i}{b_i}dim(B)\}$, for some $\frac{a_i}{b_i} \in L$. In Section \ref{sec:frac_gen_bound}, we prove a general upper bound for the size of a fractional $L$-intersecting family using Theorem \ref{th:1} proved in Part A. In Section \ref{sec:frac_singleton_bound}, we improve this upper bound for the special case when $L = \{\frac{a}{b}\}$ is a singleton set with $b$ being a prime number. 
 \section{A general upper bound} \label{sec:frac_gen_bound}
  The key idea we use here is to split the fractional $L$ intersecting family $\mathcal{F}$ into subfamilies and then use \thref{th:1} to bound each of them. 
\begin{lemma}\thlabel{lem:51}
    Let $L = \{\frac{a_1}{b_1}, \frac{a_2}{b_2}, \ldots , \frac{a_s}{b_s}\}$, where every $\frac{a_i}{b_i}$ is a irreducible fraction in the open interval $(0,1)$. Let $\mathcal{F} = \{V_1, \ldots , V_m\}$ be a fractional $L$-intersecting family of subspaces of a vector space $V$ of dimension $n$ over a finite field of size $q$. Let $\mathcal{F}^p_k$ denote subspaces in $\mathcal{F}$ whose dimensions leave a remainder $k \pmod p$, where $p$ is a prime number. That is, $\mathcal{F}^p_k := \{W \in \mathcal{F}\ |\ dim(W) \equiv k \pmod p\}$. 
    Further, let $k>0$, and $p > \max(b_1,b_2,\ldots,b_s)$. 
    Then, 
    \begin{equation*}
    |\mathcal{F}_k^p| \leq \begin{cases}
                       \qbin{n}{s}{q},\text{ if }(2p \leq n+2) \text{ or }(s < k+1)  \\
                       \qbin{n}{s}{q} + \qbin{n}{k}{q},\text{ otherwise.}
                      \end{cases}
   \end{equation*}
    
   
    \end{lemma}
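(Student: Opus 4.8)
The plan is to apply Theorem~\ref{th:1} to each subfamily $\mathcal{F}_k^p$ after verifying that it satisfies the hypotheses of that theorem. First I would fix the prime $p$ and the residue $k>0$, and consider $\mathcal{G} := \mathcal{F}_k^p$. Every subspace $W \in \mathcal{G}$ has $\dim(W) \equiv k \pmod p$, so the ``dimension set'' $K = \{k\}$ (here $r=1$), and the modulus is $b=p$. The crucial point is to check that for any two distinct $V_i, V_j \in \mathcal{G}$, the intersection dimension $\dim(V_i \cap V_j)$ lies in a set $L'$ of residues mod $p$ that is disjoint from $\{k\}$. Since $\mathcal{F}$ is fractional $L$-intersecting, for distinct $V_i,V_j$ we have $\dim(V_i\cap V_j) = \frac{a_l}{b_l}\dim(V_i)$ or $\frac{a_l}{b_l}\dim(V_j)$ for some $l$; in either case $\dim(V_i\cap V_j)$ is an integer multiple of $\dim(W)/b_l$ for $W \in \{V_i,V_j\}$, hence $b_l \mid \dim(W)$. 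Writing $\dim(W) = b_l \cdot c$, we get $\dim(V_i\cap V_j) = a_l c$ and $\dim(W) = b_l c$, so $a_l c \not\equiv b_l c \pmod p$ unless $p \mid (b_l - a_l)c$. Because $p > \max_i b_i \geq b_l > b_l - a_l \geq 1$, $p$ does not divide $b_l - a_l$; and since $\dim(W) \equiv k \not\equiv 0 \pmod p$ (as $k>0$ and $\dim(W) = b_l c$ with $b_l < p$), $p \nmid c$ either. Hence $\dim(V_i\cap V_j) \not\equiv \dim(W) \equiv k \pmod p$, so every intersection residue avoids $k$.

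Next I would bound the number of distinct intersection residues. Each $\dim(V_i\cap V_j) = \frac{a_l}{b_l}\dim(W)$ with $\dim(W) \equiv k \pmod p$, so modulo $p$ this residue is determined by $l$ and by which of the (at most $p$, but really relevant) values $\dim(W)$ takes; more simply, $\dim(V_i\cap V_j) = a_l c$ where $b_l c \equiv k \pmod p$, i.e. $c \equiv k b_l^{-1} \pmod p$ (inverse exists as $p$ is prime and $p\nmid b_l$), so the residue is $a_l k b_l^{-1} \pmod p$ — one residue per $l \in [s]$. Thus the set $L' := \{a_l k b_l^{-1} \bmod p : l \in [s]\}$ has size at most $s$, and $\mathcal{G}$ is a family satisfying the modular conditions of Theorem~\ref{th:1} with $|K|=r=1$, $|L| = |L'| = s' \leq s$, modulus $b=p$. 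I should also confirm the excluded cases of Theorem~\ref{th:1} do not apply: since $p > \max b_i \geq 2$ is prime, if $p=2$ then all $b_i \le 1$, impossible for irreducible fractions in $(0,1)$; and $p = q$-dependent exclusions ($p+1$ a power of $2$ with $p=2$, or $q=2$ with $p=6$) cannot occur since $p$ is an odd prime $\geq 3$ and $6$ is not prime. One subtlety: Theorem~\ref{th:1} requires $K,L$ to be \emph{subsets of} $\{0,\ldots,b-1\}$ and disjoint — we take representatives in $[0,p-1]$, $k \in [1,p-1]$ by assumption, and disjointness was just shown.

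Now I apply Theorem~\ref{th:1} with $r=1$, $b=p$, and $s$ replaced by $s' \le s$. With $r=1$: the condition $r(s'-r+1) \le b-1$ becomes $s' \le p-1$, which is automatic since $s' \le s < p$ (as $p > \max b_i \ge b_i > a_i \ge 1$ forces $s \le \#\{$residues$\} < p$; more directly the fractions are distinct so $s \le p-1$). Hence the ``good'' case of Theorem~\ref{th:1} holds whenever $s' + k_r \le n$, i.e. $s' + k \le n$. If $2p \le n+2$, then since $s' \le p-1$ and $k \le p-1$ we get $s'+k \le 2p-2 \le n$, so Theorem~\ref{th:1} gives $|\mathcal{G}| \le N(n,s',1,q) = \qbin{n}{s'}{q} \le \qbin{n}{s}{q}$. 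If $s < k+1$, i.e. $s \le k$, then $s' \le s \le k = k_1$, so $s' < k_1 + r$ and again $|\mathcal{G}| \le \qbin{n}{s'}{q} \le \qbin{n}{s}{q}$. Otherwise we fall into the second case of Theorem~\ref{th:1}, giving $|\mathcal{G}| \le \qbin{n}{s'}{q} + \qbin{n}{k}{q} \le \qbin{n}{s}{q} + \qbin{n}{k}{q}$. I expect the main obstacle to be the bookkeeping in the first paragraph: carefully justifying that $p \nmid (b_l-a_l)c$ and $p \nmid c$ so that the intersection residues genuinely avoid $k$, and handling the ``or'' in the fractional-intersecting definition symmetrically in $V_i$ and $V_j$. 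The monotonicity $\qbin{n}{s'}{q} \le \qbin{n}{s}{q}$ (needed since $s' \le s$, and both indices are at most $n/2$ or we use $\qbin{n}{i}{q}$ is unimodal — one should note $s < p \le n$ so no boundary issue, but to be safe one can restrict attention to $s \le n$) is routine but worth a sentence.
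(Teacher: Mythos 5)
Your proposal is correct and follows essentially the same route as the paper: apply Theorem \ref{th:1} to $\mathcal{F}_k^p$ with $K=\{k\}$, $r=1$, $b=p$, and intersection residues $a_l k b_l^{-1} \bmod p$, checking disjointness from $k$ via $p>b_l>a_l$ and $k\not\equiv 0$, the gap condition via $s'\le p-1$, $k\le p-1$, $2p\le n+2$, and that $p>2$ rules out the excluded cases. The only slip is your parenthetical claim that $s\le p-1$ (false in general, since many irreducible fractions can have denominators below $p$), but it is harmless: all that is needed is $s'\le p-1$, which holds automatically because $L'$ is a set of residues modulo $p$ all distinct from $k$, exactly as in the paper's argument.
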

    
    \begin{proof}
		Apply \thref{th:1} with family $\mathcal{F}$ replaced by $\mathcal{F}_k^p$, $K = \{k\}$, $r=1$, $b$ replaced by $p$, and each $\mu_i$ replaced by $\frac{a_i}{b_i}k$. Let $s'$ ($\leq s$) be the number of distinct $\mu_i$'s.  Notice that $k > 0$, and $p > b_i > a_i$ ensure that $k \not\equiv \frac{a_i}{b_i}k \pmod p$. Moreover, since $s'\leq p-1$ and $k \leq p-1$, we have $s'+k \leq n$ if $2p \leq n+2$. Since $p>b_i$ and every $b_i \geq 2$, we have $p>2$. This avoids  the bad cases (i) and (ii) of Theorem \ref{th:1}. Thus, we satisfy the premise of \thref{th:1} and the conclusion follows.
    \end{proof}
 
 Suppose $2p \leq n+2$. The above lemma immediately gives us a bound of $| \mathcal{F} | \leq |\mathcal{F}_0^p| + (p-1) \qbin{n}{s}{q}$. But it could be that most subspaces belong to $\mathcal{F}_0^p$. To overcome this problem, we instead choose a set of primes $P$ such that no subspace can belong to $\mathcal{F}_0^p$ for every $p \in P$. A natural choice is to take just enough primes in increasing order so that the product of these primes exceeds $n$, because then any subspace with dimension divisible by all primes in $P$ will have a dimension greater than $n$, which is not possible.
 
 \begin{lemma}\thlabel{th:52}
  Let $L = \{\frac{a_1}{b_1}, \frac{a_2}{b_2}, \ldots , \frac{a_s}{b_s}\}$, where every $\frac{a_i}{b_i}$ is an irreducible fraction in the open interval $(0,1)$. Let $\mathcal{F} = \{V_1, \ldots , V_m\}$ be a fractional $L$-intersecting family of subspaces of a vector space $V$ of dimension $n$ over a finite field of size $q$.
  Let $t := \max(b_1,b_2,\ldots,b_s)$ and $g(t,n) := \frac{2(2t+\ln n)}{\ln(2t+\ln n)}$. Suppose $2g(t,n)\ln(g(t,n)) \leq n+2$. Then,
  \begin{align*}
  |\mathcal{F}| \leq 2g^2(t,n) \ln(g(t,n)) \qbin{n}{s}{q}
  \end{align*}
  \end{lemma}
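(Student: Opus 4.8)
The plan is to combine Lemma~\ref{lem:51} with a careful choice of a small set of primes whose product exceeds $n$. First I would fix $t := \max(b_1,\ldots,b_s)$ and recall that, by Lemma~\ref{lem:51}, for any prime $p > t$ with $2p \leq n+2$ and any $k > 0$ we have $|\mathcal{F}_k^p| \leq \qbin{n}{s}{q} + \qbin{n}{k}{q} \leq 2\qbin{n}{s}{q}$ (using $k \leq p-1 \leq s$ when $s \geq p-1$, and more simply $\qbin{n}{k}{q} \leq \qbin{n}{s}{q}$ whenever $k \leq s$; one has to be slightly careful here and in the borderline case fall back on the ``otherwise'' branch, but the upshot is a uniform bound of the shape $O(1)\cdot\qbin{n}{s}{q}$ per residue class). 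Summing over the $p-1$ nonzero residues $k$ gives $|\mathcal{F} \setminus \mathcal{F}_0^p| \leq 2(p-1)\qbin{n}{s}{q}$ for each admissible prime $p$. The remaining subspaces, those in $\mathcal{F}_0^p$, have dimension divisible by $p$.

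Next I would choose primes $p_1 < p_2 < \cdots < p_\ell$, the smallest primes exceeding $t$, stopping as soon as $\prod_{j=1}^\ell p_j > n$. A subspace lying in $\mathcal{F}_0^{p_j}$ for every $j$ would have dimension divisible by $\prod p_j > n$, which is impossible in an $n$-dimensional space; hence every subspace of $\mathcal{F}$ lies outside $\mathcal{F}_0^{p_j}$ for at least one $j$, and therefore
\[
|\mathcal{F}| \;\leq\; \sum_{j=1}^{\ell} |\mathcal{F} \setminus \mathcal{F}_0^{p_j}| \;\leq\; 2\qbin{n}{s}{q}\sum_{j=1}^{\ell}(p_j - 1) \;\leq\; 2\ell\, p_\ell\, \qbin{n}{s}{q}.
\]
So it remains to bound $\ell$ and $p_\ell$. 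I would use an explicit form of the prime-counting / prime-product estimate: the smallest primes exceeding $t$ multiply past $n$ once we have collected roughly $\frac{2t + \ln n}{\ln(2t+\ln n)}$ of them below a value of about the same order; this is exactly where the function $g(t,n) = \frac{2(2t+\ln n)}{\ln(2t+\ln n)}$ is engineered to appear. Concretely I would show $\ell \leq g(t,n)$ and $p_\ell \leq g(t,n)$, so that $|\mathcal{F}| \leq 2 g^2(t,n) \ln(g(t,n))\,\qbin{n}{s}{q}$ — wait, this requires inserting the extra $\ln(g(t,n))$ factor, which comes from being more honest: a cleaner bound is $\sum_{j \le \ell}(p_j-1) \le \sum_{p \le p_\ell} p = O(p_\ell^2/\ln p_\ell)$, and substituting $p_\ell = O(g(t,n))$ and $\ln p_\ell = \Theta(\ln g(t,n))$ gives the stated $g^2(t,n)\ln(g(t,n))$ shape after absorbing constants. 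The hypothesis $2g(t,n)\ln(g(t,n)) \leq n+2$ is precisely what guarantees every prime we use satisfies $2p \leq n+2$, so Lemma~\ref{lem:51}'s good case (the single $\qbin{n}{s}{q}$ branch, or at worst the two-term branch) is always available.

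The main obstacle is the number-theoretic accounting: turning ``take just enough primes past $t$ so their product beats $n$'' into the clean closed-form bounds $\ell \leq g(t,n)$ and $p_\ell \leq g(t,n)$. This needs a quantitative statement about the density of primes in an interval $(t, x]$ — e.g. that $\prod_{t < p \le x} p \ge n$ once $x$ is a modest multiple of $\max(t,\ln n)$ — which one can extract from Chebyshev-type bounds ($\sum_{p \le x}\ln p = \Theta(x)$) together with the trivial fact that the primes in $(t,2t]$ alone are $\ge 1$ in number (Bertrand), but the bookkeeping to land exactly on $g(t,n)$ is where the care goes. Everything else — splitting $\mathcal{F}$ by residues mod each $p_j$, applying Lemma~\ref{lem:51}, and the union-bound over the $\ell$ primes — is routine. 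I would also flag the minor subtlety that Lemma~\ref{lem:51} gives $\qbin{n}{s}{q}+\qbin{n}{k}{q}$ rather than $\qbin{n}{s}{q}$ when $2p \le n+2$ fails or $s \ge k+1$ fails; under the standing hypothesis $2g(t,n)\ln(g(t,n)) \le n+2$ the first condition holds for all primes in play, so only the single-term bound (times a small constant) is ever needed, which is what keeps the final constant at $2$.
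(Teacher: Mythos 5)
Your overall strategy is the paper's: split $\mathcal{F}$ into residue classes modulo each member of a set of primes exceeding $t$ whose product exceeds $n$, bound every nonzero class via Lemma \ref{lem:51}, and note that no subspace can lie in the zero class for all of these primes. Whether you then take a union bound over the primes or, as the paper does, iterate (apply Lemma \ref{lem:51} to $\mathcal{F}$ with $p_{\alpha+1}$, then to $\mathcal{F}_0^{p_{\alpha+1}}$ with $p_{\alpha+2}$, and so on) is immaterial; both give $|\mathcal{F}|\le \sum_j (p_j-1)\qbin{n}{s}{q}$. Also, your detour through $\qbin{n}{s}{q}+\qbin{n}{k}{q}\le 2\qbin{n}{s}{q}$ is both shaky ($\qbin{n}{k}{q}\le\qbin{n}{s}{q}$ for $k\le s$ can fail once $s>n/2$) and unnecessary: under the standing hypothesis the $2p\le n+2$ branch of Lemma \ref{lem:51} is the one used, exactly as in the paper, so each class is bounded by the single term $\qbin{n}{s}{q}$.

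The genuine gap is the quantitative prime bookkeeping, which you yourself flag as the main obstacle and defer. Your claimed estimate $p_\ell\le g(t,n)$ is false in general: if one needs on the order of $g(t,n)$ primes past $t$, the largest of them has size of order $g(t,n)\ln g(t,n)$, not $g(t,n)$ (for instance $p_\ell\approx t+\ln n$, which exceeds $2(2t+\ln n)/\ln(2t+\ln n)$ as soon as $2t+\ln n$ is moderately large). The paper's accounting is: demand $p_\beta\#/t\#>n$, use $p_l\#=e^{(1+o(1))l\ln l}$ and $l\#=e^{(1+o(1))l}$ to conclude that $\beta=g(t,n)$ suffices, then invoke the Prime Number Theorem to get $p_\beta\le 2g(t,n)\ln(g(t,n))$, whence $\sum_j(p_j-1)<\beta p_\beta\le 2g^2(t,n)\ln(g(t,n))$. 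This is precisely where the $\ln(g(t,n))$ factor in the statement originates: from $p_\beta\approx\beta\ln\beta$. Your attempted repair, $\sum_{p\le p_\ell}p=O(p_\ell^2/\ln p_\ell)$ with $p_\ell=O(g)$, is internally inconsistent — if $p_\ell$ really were $O(g(t,n))$, the crude bound $\ell p_\ell\le g^2(t,n)$ would already be stronger than the target and no $\ln g$ factor would appear — and, being purely asymptotic with absorbed constants, it does not deliver the explicit constant $2$ the lemma asserts. To close the proof you need the paper's (or an equivalent) explicit choice of $\beta$ and explicit bound on $p_\beta$ rather than an order-of-magnitude argument.
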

  \begin{proof}
  For some $\beta$ to be chosen later, choose $P$ to be the set $\{p_{\alpha+1},p_{\alpha+2},\ldots,p_{\beta}\}$ where $p_l$ denotes the $l^{th}$ prime number and $p_\alpha \leq t <p_{\alpha+1}<p_{\alpha+2}<\cdots<p_\beta$. Let $l\#$ denote the product of all primes less than or equal to $l$. Thus, $p_l\#$ which is known as the \emph{primorial function}, is the product of the first $l$ primes. It is known that $p_l\# = e^{(1+o(1))l\ln l}$ and $l\# = e^{(1+o(1))l}$. We require the following condition for the set $P$:
    \begin{equation*}
	\frac{p_\beta\#}{t\#} > n 
  \end{equation*}
Using the bounds  for $p_l\#$ and $l\#$ discussed above, we find that it is sufficient to choose $\beta \geq \frac{2(2t+\ln n)}{\ln(2t+\ln n)} := g(t,n)$. Let $ \beta = g(t,n)$. From the Prime Number Theorem, it follows that $p_{\beta}$ (and so $p_{\alpha+1}, p_{\alpha+2},\ldots, p_{\beta-1}$ as well) is at most $2g(t,n)\ln(g(t,n))$. 
We are given that $2p \leq 2p_{\beta} \leq n+2$, for every $p \in P$. We apply \thref{lem:51} with $p=p_{\alpha+1}$ to get 
 \begin{equation*}
  |\mathcal{F}| \leq |\mathcal{F}_0^{p_{\alpha+1}}| + (p_{\alpha+1} -1)\qbin{n}{s}{q}
	\end{equation*}
	
	Next, apply \thref{lem:51} on $\mathcal{F}_0^{p_{\alpha+1}}$ with $p=p_{\alpha + 2}$ and so on. As argued above, no subspace is left uncovered after we reach $p_\beta$. This means,
	
	\begin{align*}
		|\mathcal{F}| & \leq (p_{\alpha+1} + p_{\alpha+2} + \cdots + p_\beta - (\beta - \alpha) ) 	\qbin{n}{s}{q} \\
		& <(\beta-\alpha) p_\beta \qbin{n}{s}{q} \\
		& < \beta p_\beta \qbin{n}{s}{q} \\
		& \leq 2 g^2(t,n) \ln(g(t,n)) \qbin{n}{s}{q}
		\end{align*}
	\end{proof}

	\begin{lemma}\thlabel{th:55}
		Let $L = \{\frac{a_1}{b_1}, \frac{a_2}{b_2}, \ldots , \frac{a_s}{b_s}\}$, where every $\frac{a_i}{b_i}$ is an irreducible fraction in the open interval $(0,1)$. Let $\mathcal{F} = \{V_1, \ldots , V_m\}$ be a fractional $L$-intersecting family of subspaces of a vector space $V$ of dimension $n$ over a finite field of size $q$.
  Let $t := \max(b_1,b_2,\ldots,b_s)$ and $g(t,n) := \frac{2(2t+\ln n)}{\ln(2t+\ln n)}$. Then,
  
  \begin{align*}
  |\mathcal{F}| \leq 2g^2(t,n) \ln(g(t,n)) \qbin{n}{s}{q} + g(t,n) \sum\limits_{i=1}^{s-1} \qbin{n}{i}{q}
  \end{align*}
	\end{lemma}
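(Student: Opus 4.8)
The plan is to mimic the proof of \thref{th:52} almost verbatim; the only new feature is that, having dropped the hypothesis $2g(t,n)\ln(g(t,n)) \le n+2$, we can no longer force the clean bound in \thref{lem:51}, so every time we bound a nonzero residue class $\mathcal{F}_k^p$ (with $p>t$ prime) we fall back on $|\mathcal{F}_k^p| \le \qbin{n}{s}{q} + \qbin{n}{k}{q}$, an inequality that holds in all cases of \thref{lem:51} since $\qbin{n}{k}{q}\ge 0$. Moreover, the extra summand $\qbin{n}{k}{q}$ needs to be retained only for $1 \le k \le s-1$: when $k \ge s$ we have $s < k+1$, so \thref{lem:51} already delivers the clean bound $|\mathcal{F}_k^p| \le \qbin{n}{s}{q}$ with no extra term. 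Hence a single prime $p$ now contributes at most $(p-1)\qbin{n}{s}{q} + \sum_{i=1}^{s-1}\qbin{n}{i}{q}$ to the count, in place of just $(p-1)\qbin{n}{s}{q}$.

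Concretely, I would set $\beta := g(t,n)$ (rounded up if necessary, exactly as in \thref{th:52}), let $p_\alpha \le t < p_{\alpha+1}$, and put $P := \{p_{\alpha+1}, \dots, p_\beta\}$; the choice of $\beta$ guarantees $\prod_{p\in P}p = p_\beta\#/t\# > n$, so $P \ne \emptyset$ and no subspace of $V$ of positive dimension can have its dimension divisible by every prime of $P$. I then peel residue classes: apply \thref{lem:51} to $\mathcal{F}$ with $p = p_{\alpha+1}$ (admissible, since $p_{\alpha+1} > t = \max_i b_i$), bounding and discarding $\mathcal{F}_1^{p_{\alpha+1}}, \dots, \mathcal{F}_{p_{\alpha+1}-1}^{p_{\alpha+1}}$ and keeping $\mathcal{F}_0^{p_{\alpha+1}}$; apply \thref{lem:51} to $\mathcal{F}_0^{p_{\alpha+1}}$ with $p = p_{\alpha+2}$, and so on through $p_\beta$. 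After all $|P| = \beta-\alpha$ primes have been processed the residual subfamily is (essentially) empty, as each of its members would have dimension divisible by $\prod_{p\in P}p > n$.

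Summing the per-prime contributions gives
\[
|\mathcal{F}| \;\le\; \Bigl(\sum_{p\in P}(p-1)\Bigr)\qbin{n}{s}{q} \;+\; (\beta-\alpha)\sum_{i=1}^{s-1}\qbin{n}{i}{q}.
\]
Then, just as in \thref{th:52}, the Prime Number Theorem gives $p \le p_\beta \le 2g(t,n)\ln(g(t,n))$ for every $p\in P$, whence $\sum_{p\in P}(p-1) < (\beta-\alpha)p_\beta < \beta p_\beta \le 2g^2(t,n)\ln(g(t,n))$; and since $t = \max_i b_i \ge 2$ the prime $2$ already lies at or below $t$, so $\alpha \ge 1$ and therefore $\beta - \alpha < \beta = g(t,n)$. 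Substituting these two estimates produces exactly the asserted bound.

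I do not anticipate a real obstacle, the skeleton being identical to that of \thref{th:52}; the steps that require attention are purely bookkeeping. First, the extra term $\qbin{n}{k}{q}$ must be collected once per prime of $P$ --- which is what produces the coefficient $g(t,n)$ in front of $\sum_{i=1}^{s-1}\qbin{n}{i}{q}$ --- rather than once per residue class, and the residues $k \ge s$ must be routed through the clean branch of \thref{lem:51} so that the heavier terms $\qbin{n}{k}{q}$ with $k \ge s$ never appear. Second, strictly speaking the residual family may still contain the zero subspace, contributing an additive $1$; but $\sum_{p\in P}(p-1) + 1 \le \beta p_\beta$ (since $|P|\ge 1$), so this is absorbed into the first estimate above. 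Finally, the legitimacy of the iterated application of \thref{lem:51} --- each stage is again a fractional $L$-intersecting family and the prime used exceeds $t$ --- is checked exactly as in \thref{th:52}.
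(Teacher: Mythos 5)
Your proposal is correct and follows essentially the same route as the paper: the same prime set $P=\{p_{\alpha+1},\dots,p_\beta\}$ with $\beta=g(t,n)$, the same iterated peeling of residue classes via \thref{lem:51} (collecting the extra terms $\qbin{n}{k}{q}$ only for $1\le k\le s-1$, once per prime), and the same estimates $p_\beta\le 2g(t,n)\ln(g(t,n))$ and $\beta-\alpha<\beta$ to reach the stated bound. Your explicit handling of the zero subspace and of the residues $k\ge s$ only makes precise bookkeeping that the paper leaves implicit.
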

 
 	\begin{proof}
 	Let $P = \{p_{\alpha+1},p_{\alpha+2},\ldots,p_{\beta}\}$, where $\beta = g(t,n)$ and $p_\beta \leq 2g(t,n)\ln(g(t,n))$.  
 		The proof is similar to the proof of Lemma \ref{th:52}. We apply Lemma \ref{lem:51} with $p = p_{\alpha + 1}$ to show that 
 		
 		\begin{align*}
 		|\mathcal{F}| \leq |\mathcal{F}_0^{p_{\alpha + 1}}| +(p_{\alpha + 1}-1)\qbin{n}{s}{q} + \sum\limits_{i=1}^{s-1} \qbin{n}{i}{q}
 		\end{align*}
 Next, we apply Lemma  \ref{lem:51} on $\mathcal{F}_0^{p_{\alpha + 1}}$ with $p = p_{\alpha + 2}$ and so on as shown in the proof of Lemma \ref{th:52} to get the desired bound. 		
 		\begin{align*}
 		|\mathcal{F}| & \leq (p_{\alpha+1}+p_{\alpha+2}+\cdots+p_{\beta} - (\beta-\alpha))\qbin{n}{s}{q} + (\beta-\alpha)\sum\limits_{i=1}^{s-1} \qbin{n}{i}{q}\\
 		& <(\beta-\alpha) \Big( p_\beta \qbin{n}{s}{q} + \sum\limits_{i=1}^{s-1} \qbin{n}{i}{q} \Big) \\
 		& < \beta \Big( p_\beta \qbin{n}{s}{q} + \sum\limits_{i=1}^{s-1} \qbin{n}{i}{q} \Big) \\
 		& \leq 2 g^2(t,n) \ln(g(t,n)) \qbin{n}{s}{q} + g(t,n) \sum\limits_{i=1}^{s-1} \qbin{n}{s}{q}
 		\end{align*}
 	\end{proof}
 
Since $p_{\alpha + 1} > t$, we have  $p_{\alpha+1}p_{\alpha+2}\cdots p_\beta > t^{\beta-\alpha}$. This implies that, if $t^{\beta-\alpha} \geq n$, then the product of the primes in $P$ will be greater than $n$ as desired. Substituting $\beta - \alpha$ with $\frac{\ln n}{\ln t}$ (and $p_\beta$ with $2g(t,n)\ln(g(t,n))$) in the second inequality above, we get another upper bound of $|\mathcal{F}| \leq 2 g(t,n) \frac{\ln(n) \ln(g(t,n))}{\ln t} \qbin{n}{s}{q} + \frac{\ln n}{\ln t} \sum\limits_{i=1}^{s-1} \qbin{n}{i}{q}$. We can do a similar substitution for $\beta - \alpha$ in the calculations done at the end of the proof of \thref{th:52} to get a similar bound. 
 	
Combining all the results in this section, we get Theorem \ref{thm:frac_general_thm}
 	
 	
 	
 	
 	
 	
 	
 	


 \begin{section}{An improved bound for singleton L}
\label{sec:frac_singleton_bound}  
  In this section, we improve the upper bound for the size of a fractional $L$-intersecting family obtained in \thref{thm:frac_general_thm} for the special case $L = \{ \frac{a}{b}\}$, where $b$ is  a constant prime. 
  Before we give the proof, below  we restate the the statement of Theorem \ref{th:61}. \vspace{0.1in} \\ 
     \textbf{Statement of Theorem \ref{th:61}}:  
     Let $L=\{\frac{a}{b}\}$, where $\frac{a}{b}$ is a positive irreducible fraction less than $1$ and $b$ is a prime. Let  $\mathcal{F}$ be a fractional $L$-intersecting family of subspaces of a vector space $V$ of dimension $n$ over a finite field of size $q$. Then, we have $|\mathcal{F}| \leq (b-1)( \qbin{n}{1}{q} +1) \lceil \frac{\ln n}{\ln b} \rceil + 2$.
  
   

    

  \begin{proof}
    
    We assume that all the subspaces in the family except possibly one subspace, say $W$, have a dimension divisible by $b$. Otherwise, $\mathcal{F}$ cannot satify the property of a fractional $\frac{a}{b}$-intersecting family. Let us ignore $W$ in the discussion to follow. For any subspace $V_i$ that is not the zero subspace, let $k$ be the largest power of $b$ that divides $dim(V_i)$. Then, $dim(V_i) = rb^{k+1} + jb^k$, for some $1 \leq j < b, r\geq 0$. Consider the subfamily,
    \begin{equation*}
	\mathcal{F}^{j,k} = \{ V_i ~:~ b^k | dim(V_i),~ b^{k+1}\not| dim(V_i),~ dim(V_i)=rb^{k+1} + j b^k\text{ for some } r\geq 0, j \in [b]\}
    \end{equation*}
    The subfamily $\mathcal{F}^{j,k}, 1\leq k \leq \lceil \frac{\ln n}{\ln b} \rceil, 1 \leq j < b$, cover each and every subspace (except the zero subspace and the subspace $W$) of $\mathcal{F}$ exactly once. We will show that $|\mathcal{F}^{j,k}| \leq \qbin{n}{1}{q}+1$, which when multiplied with the number of values $j$ and $k$ can take will immediately imply the theorem. 
  
   Let $m^{j,k} = |\mathcal{F}^{j,k}|$. Let $M^{j,k}$ be an $m^{j,k} \times \qbin{n}{1}{q}$ $0$-$1$ matrix whose rows correspond to the subspaces of $\mathcal{F}^{j,k}$ in any given order, whose columns correspond to the $1$-dimensional subspaces of $V$ in any given order, and the $(i\text{-}l)^{th}$ entry is $1$ if and only if the $i^{th}$ subspace of $\mathcal{F}^{j,k}$ contains the $l^{th}$ $1$-dimensional subspace. Let $N^{j,k} = M^{j,k}\cdot(M^{j,k})^T$. Any diagonal entry $N^{j,k}_{i,i}$ is the number of $1$-dimensional subspaces in the $i^{th}$ subspace in $\mathcal{F}^{j,k}$, and an off-diagonal entry $N^{j,k}_{i,l}$ is number of $1$-dimensional subspaces common to the $i^{th}$ and $l^{th}$ subspaces of $\mathcal{F}^{j,k}$.
   \begin{align*}
    N^{j,k}_{i,i} = \qbin{r_1b^{k+1} + jb^k}{1}{q} = \qbin{b^{k-1}}{1}{q}\qbin{r_1b^2 + jb}{1}{q^{b^{k-1}}},\\
    N^{j,k}_{i,l} = \qbin{r_2ab^{k} + jab^{k-1}}{1}{q} = \qbin{b^{k-1}}{1}{q}\qbin{r_2ab + ja}{1}{q^{b^{k-1}}},
   \end{align*}
   for some $r_1,r_2$ (may be different for different values of $i,l$). Let $P^{j,k}$ be the matrix obtained by dividing each entry of $N^{j,k}$ by $\qbin{b^{k-1}}{1}{q}$.
   \begin{equation*}
    \det(N^{j,k}) = \qbin{b^{k-1}}{1}{q}^{m^{j,k}} \det(P^{j,k})
   \end{equation*}
We will show that $\det(P^{j,k})$ is non-zero, thereby implying $\det(N^{j,k})$ is also non-zero. Consider $\det(P^{j,k}) \pmod {\qbin{b}{1}{q^{b^{k-1}}}}$.
   
   \begin{align*}
    P^{j,k}_{i,i} \equiv \qbin{r_1b^2 + jb}{1}{q^{b^{k-1}}} \pmod {\qbin{b}{1}{q^{b^{k-1}}}} \equiv 0 \pmod {\qbin{b}{1}{q^{b^{k-1}}}}, \\
    P^{j,k}_{i,l} \equiv \qbin{r_2ab + ja}{1}{q^{b^{k-1}}} \pmod {\qbin{b}{1}{q^{b^{k-1}}}} \equiv \qbin{r_3}{1}{q^{b^{k-1}}} \pmod {\qbin{b}{1}{q^{b^{k-1}}}},
   \end{align*}
  where $r_3 = ja  \mod b$ and $1 \leq r_3 \leq b-1$ (since $j<b,a<b$, and $b$ is a prime, we have $1 \leq r_3 \leq b-1$).
  We know that the determinant of an $r \times r$ matrix where diagonal entries are $0$ and off-diagonal entries are all $1$ is $(-1)^{r-1}(r-1)$.
  \begin{equation*}
   \det(P^{j,k}) \equiv (\qbin{r_3}{1}{q^{b^{k-1}}})^{m^{j,k}} (-1)^{m^{j,k}-1} (m^{j,k}-1) \pmod {\qbin{b}{1}{q^{b^{k-1}}}}
  \end{equation*}
 Let $Q^{j,k}$ be the matrix formed by taking all but the last row and the last column  of $P^{j,k}$.
 \begin{equation*}
   \det(Q^{j,k}) \equiv (\qbin{r_3}{1}{q^{b^{k-1}}})^{m^{j,k}-1} (-1)^{m^{j,k}-2} (m^{j,k}-2) \pmod {\qbin{b}{1}{q^{b^{k-1}}}}
  \end{equation*}
 We will now  show that one of $\det(P^{j,k})$ or $\det(Q^{j,k})$ is non-zero $\pmod {\qbin{b}{1}{q^{b^{k-1}}}}$ and therefore non-zero in $\mathbb{R}$. First, we show that $\qbin{r_3}{1}{q^{b^{k-1}}}^{m^{j,k}}$ is not divisible by $\qbin{b}{1}{q^{b^{k-1}}}$. 
  Suppose $s_3 \equiv r_3^{-1} \pmod b$.
   \begin{gather*}
    \qbin{r_3}{1}{q^{b^{k-1}}}^{m^{j,k}} \qbin{s_3}{1}{q^{r_3b^{k-1}}}^{m^{j,k}} =\qbin{r_3s_3}{1}{q^{b^{k-1}}}^{m^{j,k}} \\
    \qbin{r_3s_3}{1}{q^{b^{k-1}}}^{m^{j,k}} \equiv \qbin{1}{1}{q^{b^{k-1}}}^{m^{j,k}} \pmod {\qbin{b}{1}{q^{b^{k-1}}}} \equiv 1 \pmod {\qbin{b}{1}{q^{b^{k-1}}}}
  \end{gather*}
  Therefore, $\qbin{r_3}{1}{q^{b^{k-1}}}^{m^{j,k}}$ is invertible modulo $\qbin{b}{1}{q^{b^{k-1}}}$, and hence the former is not divisible by the latter. Suppose $\qbin{r_3}{1}{q^{b^{k-1}}}^{m^{j,k}} (-1)^{m^{j,k}-1} (m^{j,k}-1)$ is divible by $\qbin{b}{1}{q^{b^{k-1}}}$. 
  We may ignore $(-1)^{m^{j,k}-1}$ for divisibility purpose. Then,  there must be a product of prime powers that is equal to $(m^{j,k}-1)$ multiplied by $\qbin{r_3}{1}{q^{b^{k-1}}}^{m^{j,k}}$ such that this product is divisible by $\qbin{b}{1}{q^{b^{k-1}}}$.
  Observe that, $\qbin{r_3}{1}{q^{b^{k-1}}}^{m^{j,k}-1}$ has only lesser powers of the same primes, and $m^{j,k}-1$ and $m^{j,k}-2$ cannot have any prime in common. So, the product $\qbin{r_3}{1}{q^{b^{k-1}}}^{m^{j,k}-1} (m^{j,k}-2)$ cannot be divisible by $\qbin{b}{1}{q^{b^{k-1}}}$, which is what we wanted to prove.
  
  Therefore, either $P^{j,k}$ or $Q^{j,k}$ is a full rank matrix, or $rank(P^{j,k}) \geq m^{j,k}-1$. Being a non-zero multiple of $P^{j,k}$, $rank(N^{j,k}) \geq m^{j,k}-1$. But we know that $rank(AB) \leq \min(rank(A),rank(B))$, for any two matrices $A,B$.
  
  \begin{align*}
   m^{j,k}-1 \leq rank(N^{j,k}) & \leq \min(rank(M^{j,k}),rank((M^{j,k})^T)) \\
   & = rank(M^{j,k}) \\
   & \leq \qbin{n}{1}{q}
  \end{align*}
Or, $m^{j,k} \leq \qbin{n}{1}{q} + 1$, as required. It follows that, \begin{equation*}
   |\mathcal{F}| = m \leq 2 + \sum\limits_{\substack{ 1 \leq k \leq \lceil \frac{\ln n}{\ln b} \rceil \\ 1 \leq j < b}} m^{j,k} \leq (b-1)(\qbin{n}{1}{q}+1) \left\lceil \frac{\ln n}{\ln b} \right\rceil + 2.
  \end{equation*}

  \end{proof}
\end{section}
\section{Concluding remarks}
In \thref{th:1}, for $|\mathcal{F}|$ to be at most $N(n,s,r,q)$, one of the necessary conditions is $r(s-r+1) \leq b-1$. When $r=1$, this  condition is always true as $L \subseteq \{0, 1, \ldots , b-1\}$. However, when $r \geq 2$, it is not the case. Would it be possible to get the same upper bound for $|\mathcal{F}|$ without having to satisfy such a strong necessary condition? Another interesting question concerning Theorem \ref{th:1} is regarding its tightness. From Example \ref{exam:frankl_graham}, we know that \thref{th:1} is tight when $r=1$. However, since Theorem \ref{th:1} requires the sets $K$ and $L$ to be disjoint it is not possible to extend the  construction in Example \ref{exam:frankl_graham} to obtain a tight example for the case $r \geq 2$. Further, we know of no other tight example for this case. Therefore, we are not clear whether Theorem \ref{th:1} is tight when $r \geq 2$.      

We believe that the upper bounds given by Theorems \ref{thm:frac_general_thm} and \ref{th:61} are not tight. Proving tight upper bounds in both the scenarios is a question that is obviously interesting. One possible approach to try would be to answer the following simpler question. 
Consider the case when $L=\{\frac{1}{2}\}$. We call such a family a \emph{bisection-closed family} of subspaces. Let $\mathcal{F}$ be a bisection closed family of subspaces of a vector space $V$ of dimension $n$ over a finite field of size $q$. From Theorem \ref{th:61}, we know that $|\mathcal{F}| \leq (\qbin{n}{1}{q} + 1)\log_2n + 2$.  We believe that $|\mathcal{F}| \leq c\qbin{n}{1}{q}$, where $c$ is a constant. 
Example \ref{exam:frac_spe} gives a `trivial' bisection-closed family of size $\qbin{n-1}{1}{q}$ where every subspace contains the vector $v_1$. It would be interesting to look for non-trivial examples of large bisection-closed families. 
\bibliographystyle{apalike}

\appendix
\section{Proof of Proposition \ref{prop:1}}\label{append_proof_of_prop_inversion_formula}
\begin{proof}
(Forward direction) $\beta(W) = \sum\limits_{U\subseteq W} \alpha(U)$ is given.
    \begin{align*}
     \sum_{ \substack{U\subseteq W \\ d= dim(W)-dim(U)}} (-1)^d q^{\frac{d(d-1)}{2}} \beta(U) = \sum_{ \substack{U\subseteq W \\ d= dim(W)-dim(U)}} \left( (-1)^d q^{\frac{d(d-1)}{2}} \sum\limits_{U'\subseteq U}\alpha(U')\right) \\
      = \sum\limits_{U' \subseteq W} \left( \alpha(U') \Big( \sum_{\substack{ U' \subseteq U \subseteq W \\ d = dim(W) - dim(U)}} (-1)^d q^{\frac{d(d-1)}{2}}\Big) \right) \\
     = \sum\limits_{U' \subseteq W} \left( \alpha(U') \Big( 
     \sum\limits_{d=0}^{dim(W)-dim(U')}\left(\qbin{dim(W)-dim(U')}{d
     }{q} (-1)^d q^{\frac{d(d-1)}{2}}\Big)\right) \right)
    \end{align*}
  
  We know that $\sum\limits_{d=0}^{n}\qbin{n}{d}{q} (-1)^d q^{\frac{d(d-1)}{2}} = 0$ unless $n=0$, and $1$ otherwise. Therefore, only $dim(W)-dim(U')=0$ or $U'=W$ term will survive in the summation above, giving,
  
      \begin{align*}
	\sum_{ \substack{U\subseteq W \\ d= dim(W)-dim(U)}} (-1)^d q^{\frac{d(d-1)}{2}} \beta(U) = \alpha(W)
      \end{align*}
    \\
    (Reverse direction) $\alpha(W) = \sum\limits_{ \substack{U\subseteq W \\ d= dim(W)-dim(U)}} (-1)^d q^{\frac{d(d-1)}{2}} \beta(U)$ is given.
    
    \begin{align*}
     \sum\limits_{U\subseteq W} \alpha(U) = \sum\limits_{U \subseteq W}\left( \sum\limits_{ \substack{U'\subseteq U \\ d= dim(U)-dim(U')}} (-1)^d q^{\frac{d(d-1)}{2}} \beta(U') \right) \\
     = \sum\limits_{U' \subseteq W}\left( \beta(U') \sum\limits_{ \substack{U'\subseteq U \subseteq W\\ d= dim(U)-dim(U')}} (-1)^d q^{\frac{d(d-1)}{2}} \right) \\
     = \sum\limits_{U' \subseteq W}\left( \beta(U') \sum\limits_{ d =0}^{dim(W)-dim(U')} \left( (-1)^d q^{\frac{d(d-1)}{2}} \qbin{dim(W)-dim(U')}{d}{q} \right) \right)
    \end{align*}
    
    The only term that will survive in the above summation is the term when $dim(W)-dim(U')=0$ or $U'=W$.
    This proves $\sum\limits_{U\subseteq W} \alpha(U) = \beta(W)$. 
\end{proof}    
\section{Proof of Proposition \ref{prop:mob_inv_gen}}\label{append_proof_of_prop_mob_inv_gen}
\begin{proof}
We know that $\beta(W) = \sum_{U\subseteq W} \alpha(U)$.
     
     \begin{gather*}
      \sum_{\substack{T :\ W \subseteq T \subseteq Y \\ d=dim(Y)-dim(T)}} (-1)^d q^{\frac{d(d-1)}{2}} \beta(T) = \sum_{\substack{T :\ W \subseteq T \subseteq Y \\ d=dim(Y)-dim(T)}} \left( (-1)^d q^{\frac{d(d-1)}{2}} \sum\limits_{U \subseteq T} \alpha(U) \right) \\
      = \sum\limits_{U\subseteq Y} \left( \alpha(U) \sum\limits_{\substack{T: U \subseteq T, \\ W \subseteq T \subseteq Y, \\ d= dim(Y)-dim(T)}} (-1)^d q^{\frac{d(d-1)}{2}} \right)
     \end{gather*}
  
    Let $W'$ be the span of union (of vectors) of $U$ and $W$. From our notation above, $W' = U \cup W$. As $T$ is a superspace of both $U$ and $W$, $T$ contains all the vectors of $U$ and $W$. Thus, $W'\subseteq T$. 
    
    \begin{gather*}
      \sum_{\substack{T :\ W \subseteq T \subseteq Y \\ d=dim(Y)-dim(T)}} (-1)^d q^{\frac{d(d-1)}{2}} \beta(T) = \sum\limits_{U\subseteq Y} \left( \alpha(U) \sum\limits_{\substack{T:W \cup U \subseteq T \subseteq Y \\ d= dim(Y)-dim(T)}} (-1)^d q^{\frac{d(d-1)}{2}} \right) \\
      = \sum\limits_{U\subseteq Y} \left( \alpha(U) \sum\limits_{d=0}^{dim(Y)-dim(W')} \left( (-1)^d q^{\frac{d(d-1)}{2}} \qbin{dim(Y)-dim(W')}{d}{q} \right) \right)
     \end{gather*}
    Again, the only non-zero term in the second summation above is when $dim(Y)-dim(W')=0$, or $Y=W'$.
    
    \begin{gather*}
     \sum_{\substack{T :\ W \subseteq T \subseteq Y \\ d=dim(Y)-dim(T)}} (-1)^d q^{\frac{d(d-1)}{2}} \beta(T) = \sum\limits_{U: U \cup W = Y} \alpha(U)
    \end{gather*}
\end{proof}
\section{Proof of Lemma  \ref{lem_span}}
\label{append_proof_of_lem_span}
\begin{proof}
Let $v \in \mathbb{F}_2^S$. The key observation here is that the product $f^{x,y}(v)f^{1,z}(v), 0\leq x \leq s-1, 1 \leq y \leq \qbin{n}{x}{q}, 1\leq z \leq \qbin{n}{1}{q}$ may be replaced by the function $f^{x',w}(v)$, where $x\leq x' \leq x+1, 1\leq w\leq \qbin{n}{x'}{q}$.   If $V_{1,z} \subseteq V_{x,y}$, it is trivial that $f^{x,y}(v)f^{1,z}(v)=f^{x,y}(v)$, since $f^{x,y}(v)=1$ only if $f^{1,z}(v)=1$.    If $V_{1,z} \not\subseteq V_{x,y}$, we let $V_{x',w}$ be the span of union of vectors of $V_{1,z}$ and $V_{x,y}$. Suppose, a vector space $U$ contains both $V_{1,z}$ and $V_{x,y}$. Then, it is clear that it must contain the span of their union as well. Similarly, a vector space $U$ that does not contain either $V_{1,z}$ or $V_{x,y}$, cannot contain $V_{x',w}$.
   Thus, $f^{x,y}(v)f^{1,z}(v) = f^{x',w}(v)$. To see why $x' =x+1$ (in case $V_{1,z} \not\subseteq V_{x,y})$, the space $V_{x',w}$ may be obtained by taking any (non-zero) vector of $V_{1,z}$ and introducing it into the basis of $V_{x,y}$. The space spanned by this extended basis is exactly $V_{x',w}$ by definition, and the size of basis has increased by exactly 1.
  
    By induction, it follows that,
    
    \begin{equation*}
     f^{1,y_1}(v)f^{1,y_2}(v) \cdots f^{1,y_l}(v) = f^{x,y}(v)
    \end{equation*}
    for some $x,y$ where, $1\leq x \leq l, 1\leq y\leq \qbin{n}{x}{q}$. That is, a product of $l$ functions of the form $f^{1,y}$ may be replaced by a single function $f^{x,y}$ where $x$ is at most $l$.
    
    Now consider functions
    
    \begin{gather*}
      g^i(v)=g^i(v^{0,1},v^{1,1}, \cdots , v^{1,\scalebox{0.5}{ $\qbin{n}{1}{q}$ }}, \cdots , v^{s,1}, \cdots, v^{s,\scalebox{0.5}{ $\qbin{n}{s}{q}$ }})
      \\ = \prod\limits_{j=1}^{s} \left( \sum_{1\leq y \leq \scalebox{0.6}{$\qbin{n}{1}{q}$}} \left(v_i^{1,y}v^{1,y}\right)\ - \qbin{\mu_j}{1}{q}\right)
      \\ = \prod\limits_{j=1}^{s} \left( \sum_{1\leq y \leq \scalebox{0.6}{$\qbin{n}{1}{q}$}} \left(v_i^{1,y}f^{1,y}(v)\right)\ - \qbin{\mu_j}{1}{q}\right)
    \end{gather*}
    
    Since the functions $f^{x,y}$ only take $0/1$ values, we can reduce any exponent of 2 or more on the function after expanding the product to 1. Moreover, the terms will all be products of the form $f^{1,y_1}f^{1,y_2} \cdots f^{1,y_l}(v), 1\leq l \leq s$. These are replaced according to the observation above by single function of the form $f^{x,y}(v)$, and thus the set of function $f^{x,y}, 0\leq x \leq s, 1 \leq y \leq \qbin{n}{x}{q}$ span all functions $g^i(v)$. Note that $f^{0,1}(v)$ is the constant function $1$.
  
    Similarly, for $0\leq x \leq s-r, 1\leq y \leq \qbin{n}{x}{q}$,
    
    \begin{align*}
      g^{x,y}(v) = f^{x,y}(v)\prod\limits_{t \in [r]}\left(\sum\limits_{j=1}^{ \scalebox{0.7}{$ \qbin{n}{1}{q}\text{ }$} }v^{1,j} - \qbin{k_t}{1}{q}\right)\\
      = f^{x,y}(v)\prod\limits_{t \in [r]}\left(\sum\limits_{j=1}^{ \scalebox{0.7}{$ \qbin{n}{1}{q}\text{ }$} }f^{1,j}(v) - \qbin{k_t}{1}{q}\right)\\
     = f^{x,y}(v)\left(\sum\limits_{x'=0}^r\sum\limits_{y'=1}^{\qbin{n}{x'}{q}} c_{x',y'}f^{x',y'}(v)\right)\tag{$c_{x',y'}$ are constants}\\
     = \sum\limits_{x'=0}^s\sum\limits_{y'=1}^{\qbin{n}{x'}{q}} c_{x',y'}f^{x',y'}(v)\tag{$c_{x',y'}$ are constants}
    \end{align*}
    Thus, the set of function $f^{x,y}, 0\leq x \leq s, 1 \leq y \leq \qbin{n}{x}{q}$ span all functions $g^{x,y}(v), 0\leq x \leq s-r,1\leq y \leq \qbin{n}{x}{q}$.
\end{proof}
\end{document}